\newtheorem{thm}{Theorem}[section]
\newtheorem{prop}[thm]{Proposition}
\newtheorem{lem}[thm]{Lemma}
\newtheorem{cor}[thm]{Corollary}
\theoremstyle{remark}
\newtheorem{rem}[thm]{Remark}
 \newtheorem{ex}[thm]{Example}
\theoremstyle{definition}
\newcommand*{\rom}[1]{\expandafter\@slowromancap\romannumeral #1@} % roman typographie
\renewcommand{\phi}{\varphi} % nicer phi
\newcommand{\tr}{\mathrm{Tr}} % trace
\newcommand{\R}{\mathbb R} % real numbers
\newcommand{\nooutput}[1]{}
\begin{document}

\title[The Bismut-Elworthy-Li formula for mean-field SDEs]{The Bismut-Elworthy-Li formula for mean-field stochastic differential equations}
\date{\today}

\author[Banos]{David Ba\~nos}
\address[David Ba\~nos]{\\
Department of Mathematics \\
University of Oslo\\
P.O. Box 1053, Blindern\\
N--0316 Oslo, Norway}
\email[]{davidru@math.uio.no}

\keywords{Bismut-Elrworthy-Li formula, Malliavin calculus, Monte Carlo methods, Stochastic Differential Equations, Integration by Parts Formulas}
\subjclass[2010]{60H07, 60H10, 60J60, 65C05}

\begin{abstract}
 We generalise the so-called Bismut-Elworthy-Li formula to a class of stochastic differential equations whose coefficients might depend on the law of the solution. We give some examples of where this formula can be applied to in the context of finance and the computation of Greeks and provide with a simple but rather illustrative simulation experiment showing that the use of the Bismut-Elworthy-Li formula, also known as Malliavin method, is more efficient compared to the finite difference method.
\end{abstract}

\maketitle

\section{Introduction}
\label{Intro}
It is known that the spatial derivative of the solution to the (backward) Kolmogorov equation can be represented as an expectation of a functional of the solution of an SDE with some weight, namely the so-called Bismut-Elworthy-Li (BEL) formula as shown in \cite{Bismut.84} and extended in \cite{El-Li.94}. In \cite{FLLLT} the authors use techniques from Malliavin calculus to prove BEL formula and employ it for the computation of sensitivities of financial options, also known as \emph{Greeks}.

In many applications, it is very natural to expect that the coefficients of a stochastic differential equation (SDE) may depend on properties of the law of the solution, such as dependence on its moments. Here, we want to extend the formula to mean-field type SDEs following the essence of \cite{FLLLT} and show that such generalisation is actually non-trivial, requiring more regularity of the solution in the sense of Malliavin. First, we give a relationship between the Malliavin derivative and the spatial derivative of the solution with respect to the initial condition. Already here we see that such generalisation involves an extra factor which is no longer adapted, thus requiring more (Malliavin) regularity on the solution which is not immediate. Fortunately, if $b$ and $\sigma$ are Lipschitz continuous in space, then the solution is twice Malliavin differentiable, as it is shown in \cite{banos.nilssen.14}, and hence a formula using the Skorokhod integral may be expected. Using such relation one can find the BEL formula in this context. Some merely illustrative examples are provided in order to give a better insight on the effect of mean-field SDEs in the BEL formula. In the last examples we carry out some simulations to show that the Malliavin method is more efficient compared to a finite difference method, especially when the function involved is discontinuous.

The paper is organised as follows: In Section \ref{Frame} we collect some summarised basic facts on Malliavin Calculus needed for the derivation of the main results of the paper. In Section \ref{SectionBEL} we include all intermediate steps towards the main result which is the Bismut-Elworthy-Li formula in the context of mean-field SDEs. Finally, Section \ref{SectionAppl} is devoted to provide some illustrative examples of this generalised Bismut-Elworthy-Li formula with simulations. The findings are similar to those in \cite{FLLLT}, the use of Bismut-Elworthy-Li's formula serves as a much more efficient  method to compute sensitivities with respect to initial data, especially when the function involved in the expectation is highly irregular.

{\bf Notations:} Let $\R_+$ denote the non-negative real numbers. Denote by $|\cdot|$ the Euclidean norm in $\R^d$, $d\geq 1$. Given a Banach space $E$, denote by $\|\cdot\|_E$ its associated norm. Let $k,p\geq 0$ integers and $\mathbb{D}^{k,p}$ be the space of $k$ times Malliavin differentiable random variables with all $p$-moments. Denote by $D_s$, $s\geq 0$ the Malliavin derivative as introduced in \cite[Chapter 1, Section 1.2.1]{Nua10} and $\delta$ its dual operator (Skorokhod integral). Denote by $\mbox{Dom } \delta$ the domain of $\delta$ (Skorokhod integrable processes). Denote the trace of a matrix $M\in\mathbb R^{d\times d}$ by $\tr(M):=\sum_{j=1}^d M_{j,j}$ and by $M^\ast$ its transpose. For a (weakly) differentiable function $f:\R^d\times \R^d \rightarrow \R^d$, $(x,y)\mapsto f(x,y)$, denote by $\partial_1$, respectively by $\partial_2$, (weak) differentiation with respect to the first (space) variable $x \in \R^d$, respectively the second (space) variable $y\in \R^d$.

\section{Framework}\label{Frame}

Our main results centrally rely on tools from Malliavin calculus. We here provide a concise introduction to the main concepts in this area. For deeper information on Malliavin calculus the reader is referred to i.e.~\cite{DOP08, Mall78, Mall97, Nua10}.

\subsection{Malliavin calculus} \label{App1}  
Let $W=\{W_t, t\in [0,T]\}$ be a standard Wiener process on some complete filtered probability space $(\Omega, \mathfrak{A}, P)$ where $\mathcal{F}=\{\mathcal{F}_t\}_{t\in [0,T]}$ is the $P$-augmented natural filtration. Denote by $\mathcal{S}$ the set of simple random variables $F\in L^2(\Omega)$ in the form
$$F=f\left( \int_0^T h_1(s)dW_s, \dots, \int_0^T h_n(s)dW_s\right), \quad h_1,\dots, h_n\in L^2([0,T]), \ \ f\in C_0^{\infty} ( \R^n).$$
The Malliavin derivative operator $D$ acting on such simple random variables is the process $DF = \{D_t F, t\in [0,T]\}$ in $L^2(\Omega\times [0,T])$ defined by
\begin{equation*}
D_t F = \sum_{i=1}^{n}\partial
_{i}f\left( \int_0^T h_1(s)dW_s, \dots, \int_0^T h_n(s)dW_s\right)h_{i}(t).
\end{equation*}%
Define the following norm on $\mathcal{S}$:
\begin{align}\label{Mallnorm}
\left\Vert F\right\Vert _{1,2}:=\left\Vert F\right\Vert _{L^{2}(\Omega
)}+\left\Vert DF\right\Vert _{L^{2}(\Omega ;L^2([0,T]))}=E[|F|^2]^{1/2}+ E\left[ \int_0^T |D_t F|^2 dt\right]^{1/2}.
\end{align}%

We denote by $\mathbb{D}^{1,2}$ the closure of the family of simple random variables $\mathcal{S}$ with respect to the norm given in \eqref{Mallnorm} and we will refer to this space as the space of Malliavin differentiable random variables in $L^2(\Omega)$ with Malliavin derivative belonging to $L^2(\Omega)$.

\ \

In the derivation of the probabilistic representation for the Delta, the following chain rule for the Malliavin derivative will be essential:
\begin{lem}\label{chainrule}
Let $\varphi:\R^m\rightarrow \R$ be continuously differentiable with bounded partial derivatives. Further, suppose that $F=(F_1,\dots, F_m)$ is a random vector whose components are in $\mathbb{D}^{1,2}$. Then $\varphi(F)\in \mathbb{D}^{1,2}$ and 
$$D_t \varphi(F) = \sum_{i=1}^m \partial_i \varphi(F) D_t F_i, \quad P-a.s., \quad t\in [0,T].$$
\end{lem}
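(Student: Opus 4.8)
The plan is to establish the identity first for smooth $\varphi$ and simple random vectors, where it reduces to the classical chain rule, and then to remove both restrictions by an approximation argument that exploits the closability of $D$.

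First I would treat the case where $\varphi \in C^\infty(\R^m)$ with bounded derivatives and each $F_i \in \mathcal{S}$. After enlarging the families $h_1, \dots, h_n$ so that all components $F_i$ are expressed through the \emph{same} Gaussian vector $\big(\int_0^T h_1\,dW_s, \dots, \int_0^T h_n\,dW_s\big)$, the composition $\varphi(F)$ is again a smooth function of that vector, hence lies in $\mathcal{S}$. Differentiating and applying the ordinary chain rule to the defining formula for $D_t$ on $\mathcal{S}$ yields $D_t\varphi(F)=\sum_{i=1}^m \partial_i\varphi(F)\,D_tF_i$ directly.

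Next I would relax the smoothness of $\varphi$. Since $\varphi$ is $C^1$ with bounded partial derivatives it is Lipschitz, so $\varphi(F)\in L^2(\Omega)$ whenever each $F_i\in L^2(\Omega)$. Let $\varphi_n=\varphi*\psi_n$ be mollifications by a standard smooth kernel $\psi_n$; then each $\varphi_n$ is smooth, the bounds $\|\partial_i\varphi_n\|_\infty\le \|\partial_i\varphi\|_\infty$ hold uniformly in $n$, and $\varphi_n\to\varphi$, $\partial_i\varphi_n\to\partial_i\varphi$ uniformly on compact sets. Simultaneously, for each $i$ I pick simple $F_i^{(k)}\to F_i$ in $\mathbb{D}^{1,2}$, so that $F_i^{(k)}\to F_i$ in $L^2(\Omega)$ and $D F_i^{(k)}\to D F_i$ in $L^2(\Omega\times[0,T])$. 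The first step then applies to each pair $(\varphi_n, F^{(k)})$.

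The final and most delicate step is the passage to the limit. I would first send $k\to\infty$ with $n$ fixed: using the uniform bound on $\partial_i\varphi_n$ together with the $L^2$ convergence of $F^{(k)}$ and $DF^{(k)}$ (and, after passing to a subsequence, a.s.\ convergence), a dominated-convergence argument gives $\varphi_n(F^{(k)})\to\varphi_n(F)$ in $L^2(\Omega)$ and $\sum_i\partial_i\varphi_n(F^{(k)})D_tF_i^{(k)}\to\sum_i\partial_i\varphi_n(F)D_tF_i$ in $L^2(\Omega\times[0,T])$; since $D$ is closable this shows $\varphi_n(F)\in\mathbb{D}^{1,2}$ with the expected derivative. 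I would then send $n\to\infty$ in the same manner: $\varphi_n(F)\to\varphi(F)$ in $L^2(\Omega)$ and, because $\partial_i\varphi_n\to\partial_i\varphi$ pointwise while staying uniformly bounded, $\partial_i\varphi_n(F)D_tF_i\to\partial_i\varphi(F)D_tF_i$ in $L^2(\Omega\times[0,T])$. Closability of $D$ then delivers both $\varphi(F)\in\mathbb{D}^{1,2}$ and the stated formula. The crux is precisely this control of the two-parameter limit: the essential ingredient is that the boundedness of the partial derivatives of $\varphi$ furnishes a uniform $L^2$-dominating function for the products $\partial_i\varphi_n(F^{(k)})D_tF_i^{(k)}$, allowing the exchange of limit and integration and the repeated use of closability.
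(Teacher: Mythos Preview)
The paper does not actually prove this lemma; it is stated in Section~\ref{Frame} as a standard background fact from Malliavin calculus (it is Proposition~1.2.3 in Nualart \cite{Nua10}), so there is no ``paper's own proof'' to compare against. Your argument is the classical one and is correct in substance: start with smooth $\varphi$ and simple $F$, then pass to the limit using the closability of $D$ together with the uniform bound on the partial derivatives to control the $L^2$ norms.

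One minor technical point worth tightening: with the paper's definition of $\mathcal{S}$ the test functions are required to lie in $C_0^\infty(\R^n)$, so when you compose a smooth bounded-derivative $\varphi$ with simple functionals $F_i=f_i(\cdot)$, the resulting function $\varphi\circ(f_1,\dots,f_m)$ need not have compact support and hence $\varphi(F)$ is not literally in $\mathcal{S}$ as defined here. This is harmless---either observe that the formula for $D_t$ extends by continuity to smooth functionals with polynomial growth (which is how Nualart sets things up), or insert an additional cutoff step---but as written your Step~1 slightly overstates what is immediate from the definitions.
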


The Malliavin derivative operator $D: \mathbb{D}^{1,2} \rightarrow L^2(\Omega\times[0,T])$ admits an adjoint operator $\delta = D^\ast: \mbox{Dom} (\delta) \rightarrow L^2(\Omega)$ where the domain $\mbox{Dom}(\delta)$ is characterised by all $u\in L^2(\Omega\times[0,T])$ such that for all $F\in \mathbb{D}^{1,2}$ we have
$$E\left[ \int_0^T D_t F \, u(t) dt \right] \leq C \|F\|_{1,2},$$
where $C$ is some constant depending on $u$.

For a stochastic process $u \in \mbox{Dom}( \delta)$ (not necessarily adapted to $\mathcal{F}$) we denote by
\begin{align}\label{skorokhod}
\delta(u) := \int_0^T u(t) \delta W_t
\end{align}
the action of $\delta$ on $u$. The above expression (\ref{skorokhod}) is known as the Skorokhod integral of $u$ and it is an anticipative stochastic integral. It turns out that all $\mathcal{F}$-adapted processes in $L^2(\Omega\times[0,T])$ are in the domain of $\delta$ and for such processes $u$ we have
$$\delta( u ) =\int_0^T u(t) dW_t,$$
i.e. the Skorokhod and It\^{o} integrals coincide. In this sense, the Skorokhod integral can be considered to be an extension of the It\^{o} integral to non-adapted integrands.

As for the It\^{o} integral, there is also a corresponding isometry property for the Skorokhod integral. The proof of this can e.g. be found in \cite[Theorem 6.17.]{DOP08}.
\begin{thm}
Let $u$ be a process such that $u(s)\in \mathbb{D}^{1,2}$ for a.e. $s\in [0,T]$ and
$$E\left[ \int_0^T u(t)^2 dt+ \int_0^T \int_0^T |D_t u(s) D_s u(t)| dsdr\right] <\infty.$$
Then $u$ is Skorokhod integrable and
$$E\left[ \left(\int_0^T u(t) \delta B_t\right)^2\right] = E\left[ \int_0^T u(t)^2 dt+ \int_0^T \int_0^T D_t u(s) D_s u(t) dsdr\right].$$
\end{thm}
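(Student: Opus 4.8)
The plan is to follow the classical two-step strategy: first establish the identity for a dense class of elementary (simple) processes, where all manipulations reduce to finite sums and are therefore unproblematic, and then extend it to the general $u$ satisfying the stated integrability hypothesis by an approximation argument. Throughout, the two structural identities I would rely on are the duality relation defining $\delta$, namely $E[F\,\delta(u)] = E[\int_0^T D_t F\, u(t)\,dt]$ for $F \in \mathbb{D}^{1,2}$, and the commutation relation between the derivative and the divergence,
$$D_t \delta(u) = u(t) + \int_0^T D_t u(s)\,\delta W_s,$$
valid for sufficiently regular $u$.

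First I would treat simple processes of the form $u(t) = \sum_{j} F_j\, h_j(t)$ with $F_j \in \mathcal{S}$ and $h_j \in L^2([0,T])$, for which $\delta(u) \in \mathbb{D}^{1,2}$ and every expression below is a finite sum. Writing $E[\delta(u)^2] = E[\delta(u)\cdot\delta(u)]$ and applying the duality relation with $F = \delta(u)$ gives
$$E[\delta(u)^2] = E\left[\int_0^T D_t \delta(u)\, u(t)\,dt\right].$$
Substituting the commutation relation splits the right-hand side into two pieces: the first, $E[\int_0^T u(t)^2\,dt]$, is already the leading term of the claimed isometry; the second is $E[\int_0^T u(t)\,\delta(D_t u)\,dt]$, where $D_t u = \{D_t u(s)\}_{s}$ is viewed as a process in $s$.

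For this cross term I would use Fubini to write it as $\int_0^T E[u(t)\,\delta(D_t u)]\,dt$ and then apply duality once more, now with $F = u(t)$ and integrand $s\mapsto D_t u(s)$, obtaining $E[\int_0^T D_s u(t)\, D_t u(s)\,ds]$ inside the outer $dt$-integral. A further application of Fubini then yields exactly $E[\int_0^T \int_0^T D_t u(s)\, D_s u(t)\,ds\,dt]$, completing the identity for simple $u$.

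The final step is the density argument, and this is where I expect the main difficulty. The right-hand side of the isometry is essentially the square of a natural Hilbert norm on processes (controlling $u$ together with its derivative $D_s u(t)$), under which the simple processes are dense and which, by the stated hypothesis, contains $u$. For an approximating sequence $u_n \to u$ in this norm, the isometry proved for simple processes shows that $\{\delta(u_n)\}$ is Cauchy in $L^2(\Omega)$; closedness of $\delta$ then identifies the limit as $\delta(u)$, places $u$ in $\mbox{Dom}(\delta)$, and allows the isometry to pass to the limit. The delicate points are verifying density in this stronger norm and ensuring that the commutation relation and all interchanges of $E$, $\int dt$, and $\delta$ invoked above are legitimate on the class used; this is precisely where the hypothesis $E[\int_0^T\int_0^T |D_t u(s)\, D_s u(t)|\,ds\,dr] < \infty$, stated with an absolute value, is needed, since it guarantees the absolute convergence of the cross-term double integral so that Fubini applies.
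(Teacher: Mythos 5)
The paper itself does not prove this theorem: it is quoted with a pointer to \cite[Theorem 6.17]{DOP08}, where the argument runs through the Wiener--It\^{o} chaos expansion (one writes $u(t)=\sum_n I_n(f_n(\cdot,t))$, computes $\delta(u)=\sum_n I_{n+1}(\widetilde{f}_n)$, and the two terms of the isometry fall out of the symmetrization of $\widetilde{f}_n$ together with orthogonality of the chaoses). Your Step 1 is the other standard route, due essentially to Nualart: duality applied to $F=\delta(u)$ plus the commutation relation $D_t\delta(u)=u(t)+\delta(D_t u)$. For simple processes every manipulation you perform is legitimate, and that part of your argument is correct.

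The genuine gap is in your extension step, and it is not just a technical loose end. You assert that the right-hand side of the isometry is ``essentially the square of a natural Hilbert norm'' and that, by the stated hypothesis, $u$ lies in the corresponding space. Neither claim holds. The cross term $E\left[\int_0^T\int_0^T D_tu(s)D_su(t)\,ds\,dt\right]$ is not a square and can be negative, so the right-hand side is not a norm; the Hilbert norm that actually makes your Cauchy-sequence/closedness argument run is the $\mathbb{L}^{1,2}$-norm, $E\left[\int_0^T u(t)^2dt\right]+E\left[\int_0^T\int_0^T|D_su(t)|^2\,ds\,dt\right]$, which dominates the cross term via Cauchy--Schwarz. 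But the theorem's hypothesis --- integrability of the \emph{product} $|D_tu(s)D_su(t)|$ --- is strictly weaker than membership in $\mathbb{L}^{1,2}$. Concretely: for an adapted process $u$ one has $D_tu(s)=0$ for a.e. $t>s$, so the product vanishes for a.e. $(s,t)$, and the stated hypothesis reduces to $u\in L^2(\Omega\times[0,T])$ plus pointwise Malliavin differentiability, whereas $E\left[\int_0^T\int_0^T|D_su(t)|^2\,ds\,dt\right]$ may be infinite; the theorem then (correctly) asserts the classical It\^{o} isometry for such $u$, but your approximation scheme has no norm in which to approximate. So your proof, once the density details are filled in, establishes the isometry only on the strictly smaller class $\mathbb{L}^{1,2}$; reaching the hypothesis as stated requires a different device, such as the chaos-expansion computation of the reference the paper cites.
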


The dual relation between the Malliavin derivative and the Skorokhod integral implies the following important formula:

\begin{thm}[Duality formula]\label{duality}
Let $F\in \mathbb{D}^{1,2}$ be $\mathcal{F}_T$-measurable and $u\in \mbox{Dom}(\delta)$. Then
\begin{align}
E\left[ F\int_0^T u(t) \delta W_t\right] = E\left[ \int_0^T u(t) D_t F dt\right].
\end{align}
\end{thm}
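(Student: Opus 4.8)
The plan is to recognise that this identity is nothing but the defining adjoint relation between $D$ and $\delta$, so the proof consists in unwinding that definition and invoking the Riesz representation theorem. I would fix $u\in\mbox{Dom}(\delta)$ and consider the linear functional $L_u(F):=E\left[\int_0^T u(t)D_tF\,dt\right]$ on $\mathbb{D}^{1,2}$. By the characterisation of $\mbox{Dom}(\delta)$ recalled above, $L_u$ is continuous with respect to the $L^2(\Omega)$-norm; since $\mathcal{S}\subset\mathbb{D}^{1,2}$ is dense in $L^2(\Omega)$, this functional extends uniquely to a bounded linear functional on all of $L^2(\Omega)$. The Riesz representation theorem then produces a unique element of $L^2(\Omega)$, which by definition we write as $\delta(u)=\int_0^T u(t)\,\delta W_t$, such that $L_u(F)=E[F\,\delta(u)]$ for every $F\in\mathbb{D}^{1,2}$, and rewriting this equality is exactly the claimed formula. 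The $\mathcal{F}_T$-measurability hypothesis requires no separate treatment: every element of $\mathbb{D}^{1,2}$ is an $L^2$-limit of smooth functionals of $\{W_s:s\in[0,T]\}$ and is therefore automatically $\mathcal{F}_T$-measurable, so the assumption imposes no extra restriction.

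If a constructive derivation is preferred over appealing directly to the abstract adjoint, I would instead verify the identity first on the dense class of elementary integrands $u(t)=Fh(t)$ with $F\in\mathcal{S}$ and $h\in L^2([0,T])$. For such integrands one has the explicit expression
$$\delta(Fh)=F\int_0^T h(t)\,dW_t-\int_0^T h(t)D_tF\,dt,$$
which follows from an integration by parts against the Gaussian density together with the chain rule of Lemma \ref{chainrule}. The duality formula is then checked on these integrands by a direct computation, and one passes to a general $u\in\mbox{Dom}(\delta)$ by linearity in $u$ together with an approximation argument controlled by the Skorokhod isometry stated above.

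The step I expect to carry the real weight is confirming that the functional $L_u$ is bounded in the $L^2(\Omega)$-norm, since it is precisely this continuity that allows the Riesz representation to land in $L^2(\Omega)$ and hence defines $\delta(u)$ as a genuine square-integrable random variable rather than merely an element of a dual space. Once membership $u\in\mbox{Dom}(\delta)$ is granted, as it is by hypothesis, the remainder of the argument is formal: the duality formula is then simply a restatement of the fact that $\delta=D^\ast$.
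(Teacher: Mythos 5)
Your proposal is correct and is essentially the paper's own approach: the paper states this theorem without proof, presenting it as an immediate consequence of the fact that $\delta = D^{\ast}$, and your Riesz-representation unwinding of that adjoint relation (with the constructive verification on elementary integrands as a fallback) is precisely the argument being invoked. One caution: the $L^{2}(\Omega)$-continuity of $L_{u}$ that your argument needs is part of Nualart's definition of $\mbox{Dom}(\delta)$, but the characterisation actually printed in the paper's framework section bounds $E\left[\int_0^T D_{t}F\,u(t)\,dt\right]$ by $C\|F\|_{1,2}$ rather than by $C\|F\|_{L^{2}(\Omega)}$; with only that weaker bound, the Riesz representation would produce an element of the dual of $\mathbb{D}^{1,2}$ rather than a square-integrable random variable, so your step silently (and correctly) uses the standard definition instead of the one recalled in the paper.
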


The following is the corresponding integration by parts formula for the Skorokhod integral. See e.g. \cite[Theorem 3.15.]{DOP08}.

\begin{thm}[Integration by parts]\label{IBP}
Let $u\in \mbox{Dom}(\delta)$ and $F\in \mathbb{D}^{1,2}$ such that $Fu \in \mbox{Dom}(\delta)$. Then
$$F\int_0^T u(s) \delta W(t) = \int_0^T Fu(t) \delta W(t) + \int_0^T u(t) D_t F dt.$$
\end{thm}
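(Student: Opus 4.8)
The plan is to prove the rearranged identity
$$\delta(Fu) = F\,\delta(u) - \int_0^T u(t)\,D_t F\,dt =: R,$$
which is equivalent to the stated formula, since $Fu\in\mbox{Dom}(\delta)$ by hypothesis and hence $\delta(Fu)$ is a well-defined element of $L^2(\Omega)$. The natural route is to identify $\delta(Fu)$ through its defining duality with the Malliavin derivative: I would show that $R$ pairs with every test random variable exactly as $\delta(Fu)$ does, and then invoke the density of the simple random variables $\mathcal{S}$ to conclude equality almost surely.

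First I would fix an arbitrary $G\in\mathcal{S}$. Since such a $G$ is bounded together with its Malliavin derivative and $F\in\mathbb{D}^{1,2}$, the product $GF$ again lies in $\mathbb{D}^{1,2}$ and the Leibniz rule $D_t(GF)=G\,D_t F+F\,D_t G$ holds; this follows from the chain rule of Lemma \ref{chainrule}, applied to a smooth truncation of the product map and passing to the limit. Applying the duality formula of Theorem \ref{duality} to the $\mathcal{F}_T$-measurable random variable $GF$ and the integrand $u$ then gives
\begin{equation*}
E\!\left[GF\,\delta(u)\right] = E\!\left[\int_0^T u(t)\,D_t(GF)\,dt\right] = E\!\left[\int_0^T u(t)\,G\,D_t F\,dt\right] + E\!\left[\int_0^T u(t)\,F\,D_t G\,dt\right].
\end{equation*}

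Next I would compute the pairing of $G$ against $R$. Subtracting $E[G\int_0^T u(t)\,D_t F\,dt]$ from the previous display, the first term on the right cancels and I am left with
\begin{equation*}
E[G\,R] = E\!\left[\int_0^T F\,u(t)\,D_t G\,dt\right] = E\!\left[\int_0^T (Fu)(t)\,D_t G\,dt\right] = E\!\left[G\,\delta(Fu)\right],
\end{equation*}
where the last equality is once more the duality formula of Theorem \ref{duality}, now applied to $G$ and the Skorokhod-integrable process $Fu$. Since $\mathcal{S}$ is dense in $L^2(\Omega)$ and $\delta(Fu)\in L^2(\Omega)$, the identity $E[G(\delta(Fu)-R)]=0$ for all $G\in\mathcal{S}$ forces $\delta(Fu)=R$ almost surely, which is the assertion.

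The delicate points, which I would treat with some care, are precisely the two places where integrability is tacitly used: that $GF\in\mathbb{D}^{1,2}$, so that the Leibniz rule and the duality formula genuinely apply to it, and that $R$ is itself integrable enough for the pairing $G\mapsto E[G\,R]$ to be finite and for the density argument to yield almost-sure equality. Both are guaranteed by restricting the test class to bounded smooth random variables $G\in\mathcal{S}$, together with the standing hypotheses $u,Fu\in\mbox{Dom}(\delta)$ and $F\in\mathbb{D}^{1,2}$. The main obstacle is therefore not conceptual but one of bookkeeping, namely ensuring that every term in the two displays lies in $L^1(\Omega)$ before the cancellation is performed.
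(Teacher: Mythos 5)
The paper offers no internal proof of this theorem to compare against: it is quoted as a known result with the pointer ``see \cite[Theorem 3.15.]{DOP08}''. Your argument is, in essence, the standard proof of that cited result (it is essentially the proof of Proposition 1.3.3 in \cite{Nua10}): characterise $\delta(Fu)$ through its duality pairing with smooth cylinder variables $G\in\mathcal{S}$, apply the Leibniz rule $D_t(GF)=G\,D_tF+F\,D_tG$, and cancel, using Theorem \ref{duality} twice. The skeleton is correct, and your justification of the Leibniz rule is sound: since $G$ and $\int_0^T|D_tG|^2\,dt$ are bounded, one may approximate $F$ in $\mathbb{D}^{1,2}$ by elements of $\mathcal{S}$ and invoke closability of $D$, which is a cleaner route than truncating the product map, though both work.

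One step, however, is justified by the wrong reason and needs repair. At the end you argue that, since $\mathcal{S}$ is dense in $L^2(\Omega)$ and $\delta(Fu)\in L^2(\Omega)$, the relations $E[G(\delta(Fu)-R)]=0$ for $G\in\mathcal{S}$ force $\delta(Fu)=R$. But $R=F\,\delta(u)-\int_0^T u(t)\,D_tF\,dt$ is a priori only in $L^1(\Omega)$: each of its two terms is a product (or pairing) of two square-integrable objects, and nothing in the hypotheses makes $R$ square integrable before the theorem is proved. Consequently $G\mapsto E[G(\delta(Fu)-R)]$ is not a bounded linear functional on $L^2(\Omega)$, and $L^2$-density of the test class by itself gives nothing. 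The fix is standard and uses only tools already present: set $X:=\delta(Fu)-R\in L^1(\Omega)$; you have shown $E[GX]=0$ for every $G$ in the multiplicative class $\mathcal{S}$ of bounded random variables; by dominated convergence (legitimate precisely because the test variables are bounded and $X\in L^1$) and the monotone class theorem --- the same device used in Step 3 of the paper's proof of Theorem \ref{BELthm}, see \cite[Theorem 8]{Prot05} --- the identity extends to all bounded random variables measurable with respect to the $\sigma$-algebra generated by $W$; since $X$ is measurable with respect to that $\sigma$-algebra up to null sets, this yields $X=0$ almost surely. With that replacement your proof is complete and coincides with the argument in the reference the paper cites.
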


\section{The (mean-field) Bismut-Elworthy-Li formula}\label{SectionBEL}
The object of study is a \emph{mean-field} type \emph{stochastic differential equation} (SDE) of the form
\begin{align}\label{SDE}
\begin{split}
dX_t =& b(t,X_t,\rho_t) dt + \sum_{k=1}^m \sigma_k(t,X_t, \pi_t)dW_t^k, \quad X_0^x=x\in \R^d, \quad t\in [0,T]\\
\rho_t :=& \, E[\varphi(X_t)], \quad \pi_t := E[\psi(X_t)]
\end{split}
\end{align}
where $T\in \R$, $T>0$, $b: [0,T]\times \R^d \times \R^d \rightarrow \R^d$, $\sigma_k:[0,T]\times \R^d \times \R^d \rightarrow \R^d$, $k=1,\dots, m$, $\varphi:\R^d\rightarrow \R^d$, $\psi:\R^d\rightarrow \R^d$ are measurable functions and $W=\{W_t, t\in [0,T]\}$ is an $m$-dimensional Brownian motion on some probability space $(\Omega, \mathfrak{A}, P)$ equipped with the natural filtration augmented by all $P$-null sets, denoted by $\mathcal{F}=\{\mathcal{F}_t\}_{t\in [0,T]}$.

We will usually consider the solution as a function of $x$ and hence write $X_t^x$ to stress this fact. Otherwise, we will just write $X_t$. Moreover, we will assume the following conditions as in \cite{BLPR.14}
\begin{itemize}
\item[(i)] the functions $(t,x,y)\mapsto b(t,x,y)$ and $(t,x,y)\mapsto \sigma_k(t,x,y)$, $k=1,\dots,m$ are continuously differentiable with bounded Lipschitz derivatives uniformly with respect to $t\in [0,T]$.

\item[(ii)] Assume $d\leq m$ and the matrix $(\sigma_1 , \dots, \sigma_m)$ is uniformly elliptic and admits a right pseudo-inverse.

\item[(iii)] The functions $\varphi$ and $\psi$ are continuously differentiable with bounded Lipschitz derivatives.
\end{itemize}

The following proposition is extracted from the work by Buckdahn, Li, Peng and Rainer in \cite{BLPR.14} where they show that under the hypothesis above, equation \eqref{SDE} admits a stochastic flow of diffeomorphisms, in particular, the function $x\mapsto X_t^x$ is $P$-a.s. classically differentiable. This property is crucial to define the first variation process and relate it to the Malliavin derivative of the solution.

\begin{prop}\label{diff}
Let $X=\{X_t, t\in [0,T]\}$ be the unique global strong solution of (\ref{SDE}). Then the function $x\mapsto X_t^x$ is continuously differentiable, $P$-a.s.
\end{prop}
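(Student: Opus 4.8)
The plan is to establish continuous differentiability of $x \mapsto X_t^x$ by the standard route of showing that the candidate derivative process solves a well-posed linear SDE, and then invoking a Kolmogorov-type continuity/regularity argument. The main subtlety compared to the classical (non-mean-field) case is that the coefficients depend on $\rho_t = E[\varphi(X_t)]$ and $\pi_t = E[\psi(X_t)]$, which are \emph{deterministic} functions of $t$ once the law of the solution is fixed. This is actually the feature that makes the argument tractable: because the expectations $\rho_t$ and $\pi_t$ do not depend on the initial point $x$ in a way that couples different trajectories pathwise (the law is determined by the whole flow, but along a fixed solution it is just a given deterministic curve), differentiation in $x$ does not produce an additional McKean--Vlasov feedback term at the level of the first variation. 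Concretely, I would first argue that under hypotheses (i) and (iii) the maps $t \mapsto \rho_t$ and $t \mapsto \pi_t$ are well-defined, bounded, and continuous, so that $\tilde b(t,x) := b(t,x,\rho_t)$ and $\tilde\sigma_k(t,x) := \sigma_k(t,x,\pi_t)$ are coefficients of a \emph{classical} (non-mean-field) SDE with the same spatial regularity as $b$ and $\sigma_k$.

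With this reduction in hand, the second step is to apply the classical theory of stochastic flows. The coefficients $\tilde b$ and $\tilde\sigma_k$ are, by (i), continuously differentiable in $x$ with bounded Lipschitz derivatives uniformly in $t$; this is exactly the setting in which Kunita's theorem (or the analogous results in \cite{BLPR.14}) guarantees that the solution map $x \mapsto X_t^x$ admits a modification that is a flow of $C^1$-diffeomorphisms, $P$-a.s. The formal derivative process $Y_t := \partial_x X_t^x$ is then the unique solution of the linear (matrix-valued) SDE obtained by differentiating \eqref{SDE} in $x$, namely
\begin{align*}
dY_t = \partial_1 b(t,X_t^x,\rho_t) Y_t \, dt + \sum_{k=1}^m \partial_1 \sigma_k(t,X_t^x,\pi_t) Y_t \, dW_t^k, \quad Y_0 = I_d,
\end{align*}
where $I_d$ is the $d \times d$ identity matrix. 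Since the coefficients $\partial_1 b$ and $\partial_1 \sigma_k$ are bounded, this linear SDE has a unique square-integrable solution, and a Kolmogorov continuity argument on the increments $X_t^x - X_t^{x'}$ (controlled using the Lipschitz bounds) shows that the difference quotients converge in $L^p$ to $Y_t$, yielding genuine differentiability rather than merely a formal derivative.

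The step I expect to be the main obstacle is the justification that the mean-field terms genuinely decouple, i.e.\ that no extra term arises from differentiating $\rho_t$ and $\pi_t$ with respect to $x$. One must be careful about what is held fixed: here $X_0^x = x$ is the only initial point being varied, and $\rho_t, \pi_t$ are defined as expectations over the \emph{fixed} probability space and are therefore constants (in $t$) that do not carry an $x$-dependence of the individual trajectory being differentiated. This is precisely where the present set-up differs from a fully coupled McKean--Vlasov flow differentiated with respect to the initial \emph{distribution}; since we differentiate only in the starting point $x$ of a single solution while the law is frozen, the chain rule produces no $\partial_2 b \cdot \partial_x \rho_t$ contribution. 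I would make this explicit and then simply cite the relevant diffeomorphism result of \cite{BLPR.14} to close the argument, as the proposition is stated as being extracted from that work.
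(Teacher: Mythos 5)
Your proposal contains a genuine error, and it is precisely the error the paper is organised around avoiding. You claim that since the law is ``frozen,'' differentiating in $x$ produces no $\partial_2 b \cdot \partial_x \rho_t$ contribution, so that the first variation solves the \emph{homogeneous} linear SDE $dY_t = \partial_1 b\, Y_t\, dt + \sum_k \partial_1\sigma_k\, Y_t\, dW_t^k$. This is false in the present set-up: here $\rho_t^x = E[\varphi(X_t^x)]$ and $\pi_t^x = E[\psi(X_t^x)]$ are expectations of the solution \emph{started at} $x$, so the law being averaged over is itself a function of $x$, and $\frac{\partial}{\partial x}\rho_t^x \neq 0$ in general. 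The paper's Theorem \ref{MallSob} (equation \eqref{matrixJacobian}) makes this explicit: the correct first-variation equation carries the inhomogeneous terms $\alpha_t = \partial_2 b(t,X_t^x,\rho_t^x)\frac{\partial}{\partial x}\rho_t^x$ and $\beta_t^k = \partial_2\sigma_k(t,X_t^x,\pi_t^x)\frac{\partial}{\partial x}\pi_t^x$, so that $\frac{\partial}{\partial x}X_t^x = Y_t u(t)$ rather than $Y_t$, where $u(t)$ is a non-adapted correction factor. The presence of these extra terms is the entire reason the paper's generalisation of the Bismut--Elworthy--Li formula is non-trivial (it forces a Skorokhod rather than It\^{o} integral), so an argument that makes them vanish proves too much.

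The flaw in your reduction is visible if you track what the frozen-coefficient flow actually is. Fix $x_0$ and set $\tilde b(t,y) := b(t,y,\rho_t^{x_0})$, $\tilde\sigma_k(t,y) := \sigma_k(t,y,\pi_t^{x_0})$. The classical flow $y \mapsto \tilde X_t^y$ associated with these coefficients agrees with the mean-field solution $X_t^y$ \emph{only at} $y = x_0$; for $y \neq x_0$ the mean-field solution sees $\rho_t^y \neq \rho_t^{x_0}$. Hence differentiability of $y \mapsto \tilde X_t^y$ at $x_0$ (which your Kunita argument does give) says nothing about differentiability of $y \mapsto X_t^y$ there. The freezing trick is legitimate for fixed-$x$ statements --- existence, uniqueness, Malliavin differentiability (the paper uses exactly this observation to invoke \cite{banos.nilssen.14}) --- but not for regularity in the initial condition, where the $x$-dependence of the law must be differentiated too. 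This requires handling the coupled system for $(X^x, \partial_x X^x, \partial_x\rho^x)$, which is the content of the flow results of Buckdahn, Li, Peng and Rainer; the paper itself offers no independent argument and simply cites \cite{BLPR.14}, so the correct ``blind'' proof here is either that citation alone or a genuine reproduction of their coupled fixed-point/flow argument, not a reduction to the classical decoupled case.
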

\begin{proof}
See \cite{BLPR.14}.
\end{proof}

Next proposition shows that the first variation process is invertible for every $t>0$.

\begin{prop}\label{detYt}
Let $Y=\{Y_t, t\in [0,T]\}$ be the solution to the following matrix-valued linear SDE
$$dY_t = A_t Y_t dt + \sum_{k=1}^m B_t^k Y_t dW_t^k, \quad Y_0=I, \quad t\in [0,T]$$
where $A_t := \partial_1 b(t,X_t^x, \rho_t^x)$ and $B_t^k :=\partial_1 \sigma_k (t,X_t^x, \pi_t^x)$, $k=1,\dots,m$. Then
$$(\det Y_t)^{-1} \in \bigcap_{p\geq 1} L^p (\Omega).$$
As a consequence, $Y_t$ is $P$-a.s. invertible for every $t\in [0,T]$.
\end{prop}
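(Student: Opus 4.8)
The plan is to obtain an explicit representation of $\det Y_t$ through the stochastic Liouville (Jacobi) formula and to read off both the invertibility and the moment bound from it. The essential structural input is that, by assumption (i), the derivatives $\partial_1 b$ and $\partial_1\sigma_k$ are bounded uniformly in all their arguments, so that the coefficients $A_t=\partial_1 b(t,X_t^x,\rho_t^x)$ and $B_t^k=\partial_1\sigma_k(t,X_t^x,\pi_t^x)$ satisfy $\|A_t\|+\sum_{k=1}^m\|B_t^k\|\leq K$ for some deterministic constant $K$, $P\otimes dt$-a.e. Note that the mean-field nature of \eqref{SDE} plays no role here: once the solution $X$ is fixed, $Y$ solves a genuinely linear matrix SDE with bounded adapted coefficients, and it is only this SDE that we analyse.

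First I would apply It\^o's formula to the map $M\mapsto\det M$ evaluated along $Y_t$. Expressing the first and second derivatives of the determinant through cofactors, $\partial_{M_{ij}}\det M=\mathrm{cof}(M)_{ij}$, and using the adjugate identity $Y_t\,\mathrm{cof}(Y_t)^\ast=(\det Y_t)I$ together with the cyclic invariance of the trace, a direct computation gives
\begin{equation*}
d(\det Y_t)=(\det Y_t)\Big[\tr(A_t)+\tfrac12\sum_{k=1}^m\big((\tr B_t^k)^2-\tr((B_t^k)^2)\big)\Big]dt+(\det Y_t)\sum_{k=1}^m\tr(B_t^k)\,dW_t^k.
\end{equation*}
Crucially, this step uses only cofactors and never presupposes that $Y_t$ is invertible, so there is no circularity. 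The scalar process $\det Y_t$ therefore solves a linear (geometric) SDE with $\det Y_0=1$, and solving it as a Dol\'eans-Dade exponential yields
\begin{equation*}
\det Y_t=\exp\Big(\int_0^t\tr\Big(A_s-\tfrac12\sum_{k=1}^m (B_s^k)^2\Big)ds+\sum_{k=1}^m\int_0^t\tr(B_s^k)\,dW_s^k\Big),
\end{equation*}
where the It\^o correction $-\tfrac12(\tr B^k)^2$ produced by the exponential cancels the term $+\tfrac12(\tr B^k)^2$ in the drift. In particular $\det Y_t>0$ $P$-a.s., so $Y_t$ is invertible, and
\begin{equation*}
(\det Y_t)^{-1}=\exp\Big(-\int_0^t\tr\Big(A_s-\tfrac12\sum_{k=1}^m (B_s^k)^2\Big)ds-\sum_{k=1}^m\int_0^t\tr(B_s^k)\,dW_s^k\Big).
\end{equation*}

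It remains to bound the moments. The drift integral in the exponent is bounded in absolute value by $CT$ for a constant $C=C(K,d)$, so it contributes a harmless multiplicative constant. The martingale part $N_t:=-\sum_{k=1}^m\int_0^t\tr(B_s^k)\,dW_s^k$ has quadratic variation $\langle N\rangle_t=\sum_{k=1}^m\int_0^t(\tr B_s^k)^2\,ds\leq C^2T$, which is bounded. Hence for any $p\geq 1$, writing $\exp(pN_t)=\mathcal{E}(pN)_t\exp(\tfrac12 p^2\langle N\rangle_t)\leq\exp(\tfrac12 p^2 C^2 T)\,\mathcal{E}(pN)_t$ and taking expectations gives $E[\exp(pN_t)]\leq\exp(\tfrac12 p^2 C^2 T)$, since $\mathcal{E}(pN)$ is a true martingale of expectation one (its quadratic variation is bounded, so Novikov's condition holds trivially). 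Combining the two estimates gives $E[(\det Y_t)^{-p}]<\infty$ for every $p\geq 1$, which is the claim; finiteness a.s. re-confirms $\det Y_t\neq 0$.

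I expect the only genuinely delicate point to be the bookkeeping in the second-order It\^o term for the determinant, i.e. verifying that the quadratic-variation contribution produces exactly $\tfrac12\sum_k((\tr B_t^k)^2-\tr((B_t^k)^2))$; everything else is a routine consequence of the boundedness of $A$ and $B^k$. As an alternative that sidesteps this computation, one may instead introduce the candidate inverse $Z_t$ solving $dZ_t=Z_t(-A_t+\sum_k (B_t^k)^2)\,dt-\sum_k Z_tB_t^k\,dW_t^k$, $Z_0=I$, verify $d(Z_tY_t)=0$ by the It\^o product rule so that $Y_t^{-1}=Z_t$, obtain all $L^p$-moments of $Z_t$ from standard linear-SDE estimates (Gr\"onwall and Burkholder-Davis-Gundy, using boundedness of the coefficients), and conclude since $(\det Y_t)^{-1}=\det(Z_t)$ is a polynomial of degree $d$ in the entries of $Z_t$.
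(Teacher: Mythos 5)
Your proposal is correct and follows essentially the same route as the paper: both rest on the stochastic Liouville representation of $\det Y_t$ as an exponential (the paper cites Vrko\v{c}, you derive it via It\^{o}'s formula on the determinant) and then deduce all negative moments from the uniform boundedness of $A_t$ and $B_t^k$, the paper via Cauchy--Schwarz against an exponential martingale, you via the equivalent decomposition $\exp(pN_t)=\mathcal{E}(pN)_t\exp(\tfrac12 p^2\langle N\rangle_t)$ with Novikov. Incidentally, your drift term $-\tfrac12\sum_k\tr((B_s^k)^2)$ is the correct It\^{o} form of Liouville's formula, whereas the paper's displayed version carries $+\tfrac12\sum_k(\tr B_u^k)^2$; the discrepancy is immaterial for the conclusion since both expressions are uniformly bounded.
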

\begin{proof}
We want to show that
$$E\left[ \left| \det Y_t\right|^{-p} \right] <\infty$$
for every integer $p\geq 1$. Indeed, in virtue of (stochastic) Liouville's formula which can be found in \cite{Ivo.78}, one has
$$\det Y_t = \exp \left\{ \int_0^t \left(\tr A_u + \frac{1}{2} \sum_{k=1}^m (\tr B_u^k )^2 \right) du + \sum_{k=1}^m \int_0^t \tr B_u^k dW_u^k \right\}, \quad P-a.s.$$

Observe that since the processes $\tr B^k$ are in $L^2([0,T])$ and $\mathcal{F}$-adapted, the stochastic integrals appearing in the exponent are martingales. Hence,
$$M_t := \exp \left\{\sum_{k=1}^m \int_0^t \tr B_u^k dW_u^k-\frac{1}{2}\sum_{k=1}^m \int_0^t (\tr B_u^k)^2 du\right\}$$
is an $\mathcal{F}$-martingale with $E[M_t] = E[M_0]=1$. Thus, using Cauchy-Schwarz' inequality we have for every $p\geq 1$
\begin{align*}
E[|\det Y_t|^{-p}] \leq E\left[\exp \left\{(2p^2-p)\sum_{k=1}^m \int_0^t \tr B_u^k dW_u^k-2p \int_0^t \tr A_u du\right\} \right].
\end{align*}

In particular, the claim is reduced to showing that
$$\sup_{t\in [0,T]} \left|E\left[\exp\left\{\lambda \int_0^t \tr A_u du\right\}\right]\right| + \sup_{t\in [0,T]} \left|E\left[\exp\left\{\lambda \int_0^t \sum_{k=1}^m (\tr B_u^k)^2 du\right\}\right]\right|<\infty$$
for every $\lambda \in \R$ which clearly holds since $A$ and $B^k$, $k=1,\dots,m$ are uniformly bounded.
\end{proof}

The following statement is one of the main observations for the derivation of the Bismut-Elworthy-Li formula in the mean-field context. It can be seen as a generalisation of the well-known relation between the first variation process $Y$ in the non mean-field context and the Malliavin derivative, see e.g. \cite[Ch.2, Sec.2.3.1]{Nua10}.

\begin{thm}\label{MallSob}
Let $X=\{X_t, t\in [0,T]\}$ be the solution of \eqref{SDE}. Then for every $s,t \in [0,T]$, $s\leq t$ one has the following relationship between the spatial derivative and the Malliavin derivative of $X_t^x$
\begin{align}\label{relation}
\frac{\partial}{\partial x} X_t^x = D_s X_t^x \sigma^{-1}(s,X_s^x,\pi_s^x)Y_s u(t) \quad s\leq t,
\end{align}
where
$$u(t):=\left( I + \int_0^t Y_u^{-1}\left(\alpha_u - \sum_{k=1}^m B_u^k \beta_u^k \right) du + \sum_{k=1}^m \int_0^t Y_u^{-1} \beta_u^k dW_u^k \right),$$
$\sigma^{-1}$ denotes the right pseudo-inverse of $\sigma$, $Y=\{Y_t, t\in [0,T]\}$ is the $d \times d$ fundamental matrix satisfying
$$dY_t = A_t Y_t dt + \sum_{k=1}^m B_t^k Y_t dW_t^k, \quad Y_0=I, \quad t\in [0,T]$$
and where $A=\{A_t, t\in [0,T]\}$, $\alpha= \{\alpha_t, t\in [0,T]\}$, $B^k=\{B_t^k, t\in [0,T]\}$, $\beta^k=\{\beta_t^k, t\in [0,T]\}$, $k=1,\dots, m$ are matrix valued processes defined as:
\begin{align*}
A_t :=& \, \partial_1 b(t,X_t^x, \rho_t^x), \quad \alpha_t := \partial_2 b(t,X_t^x, \rho_t^x)\frac{\partial}{\partial x}\rho_t^x \\
B_t^k :=& \, \partial_1 \sigma_k (t,X_t^x, \pi_t^x), \quad \beta_t^k := \partial_2 \sigma_k(t,X_t^x, \pi_t^x)\frac{\partial}{\partial x}\pi_t^x
\end{align*}
for $k=1,\dots,m$.
\end{thm}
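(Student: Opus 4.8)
The plan is to derive two linear stochastic differential equations, one satisfied by the spatial derivative $\frac{\partial}{\partial x}X_t^x$ and one by the Malliavin derivative $D_s X_t^x$, and then to identify both through the fundamental matrix $Y$ of Proposition \ref{detYt}. First I would differentiate \eqref{SDE} in the initial datum $x$. Writing $V_t := \frac{\partial}{\partial x}X_t^x$, invoking the differentiability of the flow (Proposition \ref{diff}) and the chain rule, one gets
\begin{align*}
dV_t = (A_t V_t + \alpha_t)\,dt + \sum_{k=1}^m (B_t^k V_t + \beta_t^k)\,dW_t^k, \quad V_0 = I,
\end{align*}
where the inhomogeneous terms $\alpha_t,\beta_t^k$ appear \emph{precisely} because $\rho_t^x=E[\varphi(X_t^x)]$ and $\pi_t^x=E[\psi(X_t^x)]$ depend on $x$; here one uses $\frac{\partial}{\partial x}\rho_t^x = E[\partial\varphi(X_t^x)V_t]$ and the analogous identity for $\pi$, which requires interchanging $\frac{\partial}{\partial x}$ with the expectation.

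Next I would solve this inhomogeneous linear SDE by variation of constants against $Y$. Setting $Z_t := Y_t^{-1}V_t$, which is legitimate since $Y_t$ is $P$-a.s.\ invertible by Proposition \ref{detYt}, and applying It\^o's formula to the product, using $d(Y_t^{-1}) = -Y_t^{-1}A_t\,dt - \sum_{k=1}^m Y_t^{-1}B_t^k\,dW_t^k + \sum_{k=1}^m Y_t^{-1}(B_t^k)^2\,dt$, the contributions of $A_t V_t$ and $B_t^k V_t$ cancel and one is left with
\begin{align*}
dZ_t = Y_t^{-1}\Big(\alpha_t - \sum_{k=1}^m B_t^k\beta_t^k\Big)\,dt + \sum_{k=1}^m Y_t^{-1}\beta_t^k\,dW_t^k, \quad Z_0 = I.
\end{align*}
Integrating gives exactly $Z_t = u(t)$, hence $\frac{\partial}{\partial x}X_t^x = Y_t\,u(t)$.

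The Malliavin side is where the mean-field structure produces the essential asymmetry. Applying $D_s$ to \eqref{SDE} for $s\le t$, the crucial point is that $\rho_u^x$ and $\pi_u^x$ are \emph{deterministic}, so $D_s\rho_u^x = D_s\pi_u^x = 0$; consequently no inhomogeneous terms survive and, using that the solution is Malliavin differentiable with derivative solving the linearised equation (see \cite{banos.nilssen.14}) together with Lemma \ref{chainrule}, $D_s X_t^x$ solves the \emph{homogeneous} equation
\begin{align*}
D_s X_t^x = \sigma(s,X_s^x,\pi_s^x) + \int_s^t A_u D_s X_u^x\,du + \sum_{k=1}^m \int_s^t B_u^k D_s X_u^x\,dW_u^k.
\end{align*}
Since $Y_t Y_s^{-1}\sigma(s,X_s^x,\pi_s^x)$ solves the same homogeneous linear equation on $[s,T]$ with the same $\mathcal F_s$-measurable value $\sigma(s,X_s^x,\pi_s^x)$ at time $s$, uniqueness yields $D_s X_t^x = Y_t Y_s^{-1}\sigma(s,X_s^x,\pi_s^x)$ for $s\le t$. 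Right-multiplying by the pseudo-inverse $\sigma^{-1}(s,X_s^x,\pi_s^x)$ (so that $\sigma\sigma^{-1}=I$) and then by $Y_s$ gives $Y_t = D_s X_t^x\,\sigma^{-1}(s,X_s^x,\pi_s^x)Y_s$, and substituting this into $\frac{\partial}{\partial x}X_t^x = Y_t u(t)$ produces \eqref{relation}.

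The hard part will not be the Malliavin identity, which is essentially the classical one, but two features of the spatial equation. First I would have to justify rigorously the differentiation under the expectation defining $\frac{\partial}{\partial x}\rho_t^x$ and $\frac{\partial}{\partial x}\pi_t^x$ (via dominated convergence using $L^p$-bounds on the flow and its derivative), so that $\alpha_t,\beta_t^k$ are well-defined $L^2$ processes; note that $\frac{\partial}{\partial x}\rho_t^x=E[\partial\varphi(X_t^x)V_t]$ makes the $V$-equation a linear mean-field SDE, which the statement handles by treating these derivatives as given deterministic objects. Second, and most delicate computationally, is tracking the It\^o cross-variation term in the variation-of-constants step, since it is exactly this term that generates the correction $-\sum_{k=1}^m B_u^k\beta_u^k$ appearing in $u(t)$ and is the one easiest to lose under a purely formal manipulation.
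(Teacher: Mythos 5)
Your proposal is correct and follows essentially the same route as the paper: differentiate \eqref{SDE} in $x$ to obtain the inhomogeneous linear equation \eqref{matrixJacobian}, solve it by variation of constants against the fundamental matrix $Y$ to get $\frac{\partial}{\partial x}X_t^x = Y_t u(t)$, and then invoke $Y_t = D_s X_t^x\,\sigma^{-1}(s,X_s^x,\pi_s^x)Y_s$. The only difference is that where the paper simply cites the classical relation from \cite[Ch.\ 2, Sec.\ 2.3.1]{Nua10} (addressing its validity in the mean-field setting only in a subsequent remark), you actually re-derive it, correctly identifying that $D_s\rho_u^x = D_s\pi_u^x = 0$ kills the mean-field terms so that $D_sX_t^x$ solves the homogeneous equation---a worthwhile addition, since the cited result is stated for non-mean-field SDEs.
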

\begin{proof}
Differentiating with respect to $x\in \R^d$ we have that $\frac{\partial}{\partial x} X_t^x$ satisfies the following matrix-valued linear equation
\begin{align}\label{matrixJacobian}
\begin{split}
\frac{\partial}{\partial x} X_t^x =& \, I + \int_0^t \left(\partial_1 b(u,X_u^x , \rho_u^x) \frac{\partial}{\partial x} X_u^x + \partial_2 b(u,X_u^x , \rho_u^x) \frac{\partial}{\partial x}\rho_u^x\right) du \\
&+ \sum_{k=1}^m \int_0^t \left(\partial_1 \sigma_k(u,X_u^x , \pi_u^x) \frac{\partial}{\partial x} X_u^x + \partial_2 \sigma_k(u,X_u^x , \pi_u^x) \frac{\partial}{\partial x}\pi_u^x\right) dW_u^k.
\end{split}
\end{align}
Using the notations in the statement of the theorem, we can solve (\ref{matrixJacobian}) and express $\frac{\partial}{\partial x} X_t$ as
$$\frac{\partial}{\partial x} X_t = Y_t \left( I + \int_0^t Y_u^{-1}\left(\alpha_u - \sum_{k=1}^m B_u^k \beta_u^k \right) du + \sum_{k=1}^m \int_0^t Y_u^{-1} \beta_u^k dB_u^k \right)$$
where $Y=\{Y_t, t\in [0,T]\}$ is the $d\times d$ fundamental matrix satisfying $Y_0 = I$ and
$$dY_t = A_t Y_t dt + \sum_{k=1}^m B_t^k Y_t dW_t^k, \quad t\in [0,T].$$

By the well-known classical relation, see e.g. \cite[Chapter 2, Section 2.3.1]{Nua10}, it is true that,
$$Y_t = D_s X_t \sigma^{-1}(s,X_s^x, \pi_s^x) Y_s, \quad s\leq t$$
where $\sigma^{-1}$ denotes the right pseudo-inverse of $\sigma$ and hence the relation follows.
\end{proof}

\begin{rem}
For the relation $Y_t=D_s X_t \sigma^{-1}(s,X_s,\pi_s)Y_s$, $s\leq t$ to hold in the mean-field setting one also needs the property that $x\mapsto X_t^x$ defines a stochastic semiflow. In the mean-field case we point out that the fact that $b$, $\sigma$, $\varphi$ and $\psi$ are continuously differentiable with bounded Lipschitz derivative implies this fact in virtue of \cite{BLPR.14}.
\end{rem}

It is shown in \cite{banos.nilssen.14} that SDE (\ref{SDE}) is twice Malliavin differentiable when the vector field $b$ does not depend on the law of $X$ and one has additive noise. Nevertheless, using the same method one can prove the same result since the dependence on $E[\varphi(X_t)]$ and $E[\psi (X_t)]$ does not bring stochasticity to the equation. In the sense that the Malliavin derivative of $X_t$ for every fixed $t\in [0,T]$ takes the same form as in the usual linear setting.

Henceforth, we will assume the following technical condition for simplicity.

\begin{itemize}
\item There exists a bijection $\Lambda_t :\R^d \rightarrow \R^d$, for every $t\in [0,T]$, such that the function $b_\ast:[0,T] \times \R^d\times \R^d \times \R^d \rightarrow \R^d$ defined as
\begin{align*}
b_\ast (t,x,\rho_t,\pi_t) :=& \, \partial_t \Lambda_t(\Lambda_t^{-1}(x)) +\partial_x \Lambda_t (\Lambda_t^{-1}(x))[b(t,\Lambda_t^{-1}(x),\rho_t)]\\
&+ \frac{1}{2} \partial_x^2 \Lambda_t(\Lambda_t^{-1}(x)) \left[ \sum_{i=1}^d \sigma(t,\Lambda_t^{-1}(x),\pi_t)[e_i] , \sum_{i=1}^d \sigma(t,\Lambda_t^{-1}(x),\pi_t)[e_i]\right]
\end{align*}
where $\{e_i\}_{i=1,\dots, d}$ is a basis of $\R^d$, is Lipschitz continuous uniformly with respect to $t\in [0,T]$.
\end{itemize}
The reason of the above condition is to use It\^{o}'s formula on the process $Z_t=\Lambda(X_t)$ so that $Z$ satisfies an SDE with additive noise for which the results from \cite{banos.nilssen.14} can be applied. Although, it might seem that the class of such processes is small, it covers a wide variety of models which are relevant in applications, such as for instance geometric-type models.

\begin{prop}\label{Malldiff}
Let $X=\{X_t, t\in [0,T]\}$ be the unique global strong solution of (\ref{SDE}). Then we have
$$X_t \in \bigcap_{p\geq 1} \mathbb{D}^{2,p}(\Omega)$$
for every $t\in [0,T]$.
\end{prop}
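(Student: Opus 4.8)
The plan is to reduce the mean-field equation to a classical It\^{o} SDE with additive noise, for which twice Malliavin differentiability is already established in \cite{banos.nilssen.14}, and then to transfer this regularity back to $X$ by means of the chain rule. The conceptual starting point is the observation that the maps $t\mapsto \rho_t=E[\varphi(X_t)]$ and $t\mapsto \pi_t=E[\psi(X_t)]$ are \emph{deterministic}: being expectations, they carry no $\omega$-dependence, so $D_s\rho_t=D_s\pi_t=0$ for all $s,t\in[0,T]$. Hence the mean-field arguments enter the coefficients purely as deterministic, time-dependent parameters. Writing $\bar b(t,x):=b(t,x,\rho_t)$ and $\bar\sigma_k(t,x):=\sigma_k(t,x,\pi_t)$, the process $X$ therefore solves an ordinary (non mean-field) SDE whose coefficients $\bar b,\bar\sigma_k$ inherit from hypotheses (i) and (iii) continuous differentiability with bounded Lipschitz derivatives, uniformly in $t$.

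First I would apply It\^{o}'s formula to the transformed process $Z_t:=\Lambda_t(X_t)$. By the very definition of $b_\ast$, the drift of $Z$ equals $b_\ast(t,Z_t,\rho_t,\pi_t)$, which is Lipschitz uniformly in $t$ by the standing technical assumption, while $\Lambda$ is constructed precisely so that the resulting diffusion coefficient $\partial_x\Lambda_t(\Lambda_t^{-1}(\cdot))\,\bar\sigma(t,\cdot)$ no longer depends on the state. Thus $Z$ solves an SDE with additive (deterministic, time-dependent) noise and uniformly Lipschitz drift. Since $\rho_t,\pi_t$ are deterministic, this equation falls squarely within the framework of \cite{banos.nilssen.14}, and I would invoke that result to obtain $Z_t\in\bigcap_{p\geq 1}\mathbb{D}^{2,p}(\Omega)$ for every $t\in[0,T]$.

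The final step is to carry the regularity from $Z$ back to $X=\Lambda_t^{-1}(Z_t)$. As $\Lambda_t$ is a bijection, twice continuously differentiable with controlled derivatives, so is its inverse; applying the Malliavin chain rule of Lemma \ref{chainrule} componentwise once yields $X_t\in\mathbb{D}^{1,p}$ with $D_sX_t=\partial_x\Lambda_t^{-1}(Z_t)\,D_sZ_t$, and a second application, now to the $\mathbb{D}^{1,p}$-valued process $s\mapsto D_sZ_t$ using the $C^2$-regularity of $\Lambda_t^{-1}$, produces the second Malliavin derivative and hence $X_t\in\bigcap_{p\geq1}\mathbb{D}^{2,p}(\Omega)$.

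I expect the main obstacle to be precisely this last transfer. The chain rule as stated in Lemma \ref{chainrule} requires \emph{globally bounded} derivatives, whereas $\partial_x\Lambda_t^{-1}$ and $\partial_x^2\Lambda_t^{-1}$ need not be bounded, so one must instead control all $p$-moments of the products $\partial_x\Lambda_t^{-1}(Z_t)D_sZ_t$ and $\partial_x^2\Lambda_t^{-1}(Z_t)(D_rZ_t,D_sZ_t)$. I would handle this by a localisation/mollification argument, approximating $\Lambda_t^{-1}$ by functions with bounded derivatives and passing to the limit via H\"older's inequality together with the moment bounds on $Z_t$, $DZ_t$ and $D^2Z_t$ afforded by $Z_t\in\bigcap_p\mathbb{D}^{2,p}$; equivalently one may appeal to an extended chain rule valid for $\mathbb{D}^{2,p}$-random variables under polynomial growth of the derivatives. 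The argument then closes as soon as the growth of the derivatives of $\Lambda_t^{-1}$ is matched against these moment estimates.
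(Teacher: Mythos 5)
Your proposal follows essentially the same route the paper takes, which it leaves almost entirely implicit: the mean-field terms $\rho_t,\pi_t$ are deterministic so the coefficients become deterministic time-dependent parameters, the standing technical condition on $\Lambda_t$ is used to turn $Z_t=\Lambda_t(X_t)$ into an SDE with additive noise and uniformly Lipschitz drift, and the twice-Malliavin-differentiability result of \cite{banos.nilssen.14} is then invoked. The paper's own proof consists of the bare citation ``See \cite{banos.nilssen.14}'' (with the reduction sketched in the surrounding text), so your final step---transferring the regularity back through $\Lambda_t^{-1}$, where Lemma \ref{chainrule} does not directly apply and a localisation plus moment-bound argument is needed---is a genuine detail that the paper glosses over but that your plan handles correctly.
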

\begin{proof}
See \cite{banos.nilssen.14}.
\end{proof}

\begin{prop}\label{SkorokhodI}
Let $t\in [0,T]$ and define
$$F:=\int_0^t Y_u^{-1}\left(\alpha_u - \sum_{k=1}^m B_u^k \beta_u^k \right) du, \quad G:= \sum_{k=1}^m \int_0^t Y_u^{-1} \beta_u^k dB_u^k,$$
then $F,G \in \mbox{Dom }\delta$.
\end{prop}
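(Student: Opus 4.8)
The plan is to prove the substantive regularity behind the statement, namely that $F$ and $G$, as $\mathcal{F}_t$-measurable matrix-valued random variables, belong to $\mathbb{D}^{1,2}$ with moments of all orders. These are the two anticipating building blocks of the weight $\sigma^{-1}(s,X_s^x,\pi_s^x)Y_s\,u(t)$ that must be Skorokhod integrated in the Bismut-Elworthy-Li formula, and once $F,G\in\mathbb{D}^{1,2}$ their Skorokhod integrability is immediate: viewing each as a constant-in-time process, the isometry criterion stated above reduces, by Cauchy-Schwarz, to $\|F\|_{1,2}^2<\infty$; and for the genuinely anticipating factor $s\mapsto\sigma^{-1}(s,X_s^x,\pi_s^x)Y_s\,F$ one combines the adaptedness (hence $\mbox{Dom }\delta$-membership) of $s\mapsto\sigma^{-1}(s,X_s^x,\pi_s^x)Y_s$ with the integration-by-parts rule of Theorem \ref{IBP}. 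So the whole task is to produce $F,G\in\mathbb{D}^{1,2}\cap\bigcap_{p\geq1}L^p(\Omega)$.

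First I would record the integrability of the ingredients. The coefficients $\alpha_u$, $B_u^k$, $\beta_u^k$ are bounded uniformly in $u\in[0,T]$: the spatial derivatives $\partial_1 b,\partial_2 b,\partial_1\sigma_k,\partial_2\sigma_k$ are bounded by hypothesis (i), while $\frac{\partial}{\partial x}\rho_u^x=E[\partial\varphi(X_u^x)\frac{\partial}{\partial x}X_u^x]$ and $\frac{\partial}{\partial x}\pi_u^x$ are deterministic and, being continuous in $u$ on $[0,T]$, bounded. The matrix $Y_u$ solves a linear SDE with bounded coefficients, hence $Y_u\in\bigcap_p L^p(\Omega)$ uniformly in $u$; combining this with the negative-moment bound $(\det Y_u)^{-1}\in\bigcap_p L^p(\Omega)$ of Proposition \ref{detYt} through $Y_u^{-1}=(\det Y_u)^{-1}\mathrm{adj}(Y_u)$ and H\"older's inequality yields $Y_u^{-1}\in\bigcap_p L^p(\Omega)$ uniformly in $u$. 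Consequently $F,G\in\bigcap_p L^p(\Omega)$, and in particular both lie in $L^2(\Omega)$.

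For Malliavin differentiability I would invoke Proposition \ref{Malldiff}, which guarantees $X_t\in\mathbb{D}^{2,p}$ for all $p$. The chain rule (Lemma \ref{chainrule}, in its extension to the Lipschitz functions $\partial_i b,\partial_i\sigma_k$) then gives $A,B^k,\alpha,\beta^k\in\mathbb{D}^{1,2}$. Differentiating the linear equation for $Y$ in the Malliavin sense produces a linear equation for $D_rY$ whose inhomogeneity is built from $D_rA$, $D_rB^k$, that is, from the second Malliavin derivative $D_rD_sX$; solving it and applying Gr\"onwall's lemma gives $Y_u\in\mathbb{D}^{1,2}$ with $L^p$-controlled derivative. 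The identity $D_r(Y_u^{-1})=-Y_u^{-1}(D_rY_u)Y_u^{-1}$, together with the $L^p$-bounds on $Y^{-1}$ and $D_rY$, then places $Y_u^{-1}$ in $\mathbb{D}^{1,2}$. Finally, commuting $D_r$ with the Lebesgue integral defining $F$ and with the It\^o integral defining $G$ (the latter via $D_r\int_0^t\psi_u\,dB_u^k=\psi_r\mathbf{1}_{\{r\leq t\}}+\int_0^t D_r\psi_u\,dB_u^k$) and using the integrability just obtained gives $F,G\in\mathbb{D}^{1,2}$.

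The hard part will be the step treating $Y^{-1}$: controlling $D_r(Y_u^{-1})=-Y_u^{-1}(D_rY_u)Y_u^{-1}$ in $L^2(\Omega\times[0,T])$ forces one to balance the negative moments of $\det Y$ (Proposition \ref{detYt}) against moments of $D_rY$ by H\"older's inequality, and the existence of $D_rY$ itself rests on $D_rD_sX$ being well-defined. This is precisely where the mere first-order Malliavin differentiability of $X$ is insufficient and the stronger conclusion $X_t\in\mathbb{D}^{2,p}$ of Proposition \ref{Malldiff} becomes indispensable, matching the remark in the introduction that the mean-field extension requires extra Malliavin regularity on the solution.
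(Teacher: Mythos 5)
Your proposal is correct and follows essentially the same route as the paper: reduce Skorokhod integrability of $F$ and $G$ to membership in $\mathbb{D}^{1,2}$, pass the Malliavin derivative through the Lebesgue and It\^{o} integrals (your commutation formula is exactly what the paper cites from Nualart), and locate the real difficulty in showing $Y_u^{-1}\in\mathbb{D}^{1,2}$, which both arguments obtain by combining the negative moments of $\det Y$ from Proposition \ref{detYt} with the second-order Malliavin regularity of $X$ from Proposition \ref{Malldiff}. The only divergence is one sub-step: you establish $Y\in\mathbb{D}^{1,2}$ by Malliavin-differentiating the linear SDE for $Y$ and applying Gr\"onwall, whereas the paper reads it off directly from the relation $Y_t = D_s X_t\,\sigma^{-1}(s,X_s^x,\pi_s^x)Y_s$ of Theorem \ref{MallSob}, so that $X_t\in\mathbb{D}^{2,2}$ immediately gives $Y_t\in\mathbb{D}^{1,2}$; the two are equivalent in substance, though the paper's shortcut avoids invoking a chain rule for the merely Lipschitz derivatives $\partial_1 b$, $\partial_1\sigma_k$, which your route needs and which in turn rests on absolute continuity of the law of $X_u$, available here by the ellipticity hypothesis.
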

\begin{proof}
Since $\mathbb{D}^{1,2}\subset \mbox{Dom }\delta$, see \cite[Proposition 1.3.1.]{Nua10}, one needs to show that $F,G\in \mathbb{D}^{1,2}$. In both cases it suffices to show that the integrands are Malliavin differentiable. For the Lebesgue integral it is immediate as for the It\^{o} integral one may justify this by \cite[Lemma 1.3.4.]{Nua10}.

Now, since $Y_t = D_s X_t Y_s$, $s\leq t$ and $X_t \in \mathbb{D}^{2,2}$ for every $t\in [0,T]$ we have together with Proposition \ref{detYt} that $Y_t^{-1} \in \mathbb{D}^{1,2}$ for every $t\in [0,T]$. The result follows since the functions $\alpha$, $B^k$ and $\beta^k$ are continuously differentiable in the first variable with bounded derivative.
\end{proof}

\begin{cor}\label{SkorokhodII}
The process
$$s\mapsto \sigma^{-1}(s,X_s^x,\pi_s^x)Y_s \left( I + \int_0^t Y_u^{-1}\left(\alpha_u - \sum_{k=1}^m B_u^k \beta_u^k \right) du + \sum_{k=1}^m \int_0^t Y_u^{-1} \beta_u^k dW_u^k \right),$$
$0\leq s \leq t$, is Skorokhod integrable.
\end{cor}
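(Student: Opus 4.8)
Throughout, write $u(t)=I+F+G$ for the non-adapted random matrix appearing inside the bracket, where $F$ and $G$ are as in Proposition \ref{SkorokhodI}, and set $\Phi_s:=\sigma^{-1}(s,X_s^x,\pi_s^x)Y_s$, so that the process to be integrated is $v_s:=\Phi_s\,u(t)$, $0\le s\le t$. The crucial structural observation is that $u(t)$ does not depend on the integration variable $s$; hence $v$ factorises into an $\mathcal F$-adapted factor $\Phi$ and a fixed square-integrable factor $u(t)$. The plan is to verify, entrywise, the hypotheses of the sufficient condition for Skorokhod integrability stated above (see \cite[Theorem 6.17.]{DOP08}), namely that $v_s\in\mathbb D^{1,2}$ for a.e.\ $s$ and that
$$E\left[\int_0^t |v_s|^2\,ds+\int_0^t\int_0^t |D_r v_s\, D_s v_r|\,dr\,ds\right]<\infty.$$

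First I would check that $\Phi_s\in\mathbb D^{1,2}$ for a.e.\ $s$, in fact with moments of every order. By Proposition \ref{detYt} together with the linear equation for $Y$ and the uniform boundedness of $A$ and $B^k$, both $Y_s$ and $Y_s^{-1}$ belong to $\bigcap_{p\ge 1}L^p(\Omega)$; moreover, exactly as in the proof of Proposition \ref{SkorokhodI}, the identity $Y_t=D_sX_t\,Y_s$ combined with $X_t\in\mathbb D^{2,p}$ (Proposition \ref{Malldiff}) shows that $Y_s\in\mathbb D^{1,p}$ for every $p$. Under the uniform ellipticity assumption (ii), the right pseudo-inverse $\sigma^{-1}(s,\cdot,\cdot)$ is $C^1$ with bounded derivatives, so $\sigma^{-1}(s,X_s^x,\pi_s^x)\in\mathbb D^{1,p}$ by the chain rule (Lemma \ref{chainrule}) and the fact that $X_s^x\in\mathbb D^{1,p}$ (here $\pi_s^x$ is deterministic). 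Since $\mathbb D^{1,p}$ is stable under products of factors possessing moments of all orders, $\Phi_s$, and likewise $u(t)=I+F+G\in\bigcap_p\mathbb D^{1,p}$, are Malliavin differentiable, and therefore so is the product $v_s=\Phi_s\,u(t)$.

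Next I would establish the two integrability bounds. The first is immediate: $\sigma^{-1}$ is bounded and $Y_s,u(t)$ have moments of all orders, whence $E\int_0^t|v_s|^2\,ds<\infty$. For the double integral, the Leibniz rule gives
$$D_r v_s=(D_r\Phi_s)\,u(t)+\Phi_s\,(D_r u(t)),$$
and every factor on the right-hand side, namely $\Phi_s$, $u(t)$, $D_r\Phi_s$ and $D_r u(t)$, lies in $\bigcap_p L^p$ with the relevant $(r,s)$-integrals of the Malliavin derivatives finite. Expanding $D_r v_s\,D_s v_r$, applying the Cauchy-Schwarz inequality in $\Omega$ and then H\"older in $(r,s)$, each resulting term is controlled by products of such $L^p$ norms and is therefore finite. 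This verifies the hypotheses above and yields $v\in\mbox{Dom}\,\delta$, i.e.\ the asserted Skorokhod integrability, the matrix-valued statement being understood entrywise.

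The step I expect to be the main obstacle is the Malliavin differentiability of the non-adapted factor, and in particular of $Y_s$ inside $\Phi_s$: since $Y_s$ is itself built from the first variation, equivalently the Malliavin derivative, of $X$, its differentiability is \emph{not} automatic and is precisely what forces the second-order Malliavin regularity $X_t\in\mathbb D^{2,p}$ of Proposition \ref{Malldiff}. This is the point where the mean-field problem genuinely differs from the classical one, in which the corresponding integrand is adapted and no such extra regularity is needed. Verifying that the pseudo-inverse $\sigma^{-1}$ inherits enough smoothness from (ii) to apply the chain rule is a secondary technical point; once both are in hand, the remaining moment estimates are routine.
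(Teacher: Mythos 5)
Your proposal is correct, and it reaches the conclusion by a genuinely more explicit route than the paper, whose entire proof is one sentence: the integrand is ``the product of an adapted process, hence Skorokhod integrable, and by Proposition \ref{SkorokhodI} a Skorokhod integrable random variable.'' You use the same structural decomposition $v_s=\Phi_s\,u(t)$ with $\Phi_s$ adapted and $u(t)=I+F+G$ fixed, and the same key inputs (Propositions \ref{detYt}, \ref{Malldiff} and \ref{SkorokhodI}), but where the paper appeals to an unproved product principle (adapted process times Skorokhod-integrable random variable is Skorokhod integrable), you instead verify the hypotheses of the isometry criterion of \cite[Theorem 6.17.]{DOP08}: $v_s\in\mathbb{D}^{1,2}$ for a.e.\ $s$, together with the $L^2$ bound and the double-integral bound on the Malliavin derivatives. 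This is a real gain in rigor: the product principle is not among the paper's stated results, and its own integration-by-parts formula (Theorem \ref{IBP}) \emph{assumes} $Fu\in\mbox{Dom }\delta$ as a hypothesis, so it cannot be used to establish that fact; making the paper's one-liner precise (say via \cite[Proposition 1.3.3]{Nua10}) requires exactly the Malliavin differentiability and moment estimates that you supply. You also correctly isolate the crux that the paper leaves implicit: the non-adaptedness of $u(t)$, and hence the need for $Y_s$ and $u(t)$ to be Malliavin differentiable, is what forces the second-order regularity $X_t\in\mathbb{D}^{2,p}$ of Proposition \ref{Malldiff}. Two minor blemishes, both shared with or inherited from the paper: the identity you quote should carry the diffusion factor, $Y_t=D_sX_t\,\sigma^{-1}(s,X_s^x,\pi_s^x)Y_s$ as in Theorem \ref{MallSob}, not $Y_t=D_sX_t\,Y_s$; and the claim that assumption (ii) makes $\sigma^{-1}$ globally $C^1$ with bounded derivatives deserves a line of verification before Lemma \ref{chainrule} is invoked. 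Neither point affects the validity of your argument.
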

\begin{proof}
Indeed, it is the product of an adapted process, hence Skorokhod integrable and by Proposition \ref{SkorokhodI} a Skorokhod integrable random variable.
\end{proof}

\begin{thm}[Bismut-Elworthy-Li formula]\label{BELthm}
Let $X=\{X_t, t\in [0,T]\}$ be the unique global strong solution of (\ref{SDE}). Let $\Phi: \R^d \rightarrow \R_+$ be a measurable function such that $\Phi(X_t^x)\in L^2(\Omega)$. Define the function
$$v(x) := E[\Phi(X_t^x)].$$
Then
\begin{align}\label{BEL}
\frac{\partial}{\partial x} v(x) =E\left[\Phi(X_t^x) \int_0^t a(s) \left[\sigma^{-1}(s,X_s^x,\pi_s^x)Y_s u(t)\right]^\ast\delta W_s\right]^\ast
\end{align}
where $\ast$ denotes transposition, $Y$ is the fundamental matrix obtained in Theorem \ref{MallSob} and
$$u(t):= I + \int_0^t Y_u^{-1}\left(\alpha_u - \sum_{k=1}^m B_u^k \beta_u^k \right) du + \sum_{k=1}^m \int_0^t Y_u^{-1} \beta_u^k dW_u^k,$$
here $a:[0,T] \rightarrow \R$ is an integrable function such that $\int_0^t a(s)ds = 1$ and $a\equiv 0$ otherwise. The functions $\alpha$, $B^k$, $\beta^k$, $k=1,\dots,m$ are defined as in Theorem \ref{MallSob}.

In fact, the Skorokhod integral in \eqref{BEL} can be expressed as an It\^{o} integral plus a finite variation term. Indeed, since $u(t) \in \mathbb{D}^{1,2}$, integration by parts for the Skorokhod integral, see Theorem \ref{IBP} or \cite[Theorem 3.15]{DOP08}, yields
\begin{align}\label{BEL2}
\begin{split}
\frac{\partial}{\partial x} v(x) =& \, E\Bigg[\Phi(X_t^x) \Bigg(\int_0^t a(s) \left[\sigma^{-1}(s,X_s^x,\pi_s^x)Y_s\right]^\ast dW_s \, u(t)\\
&\hspace{4cm}- \int_0^t a(s) [\sigma^{-1}(s,X_s^x,\pi_s^x)Y_s D_s u(t)]^{\ast} ds\Bigg)\Bigg]^\ast.
\end{split}
\end{align}
\end{thm}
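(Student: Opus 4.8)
The plan is to establish \eqref{BEL} first for smooth integrands and then to remove the regularity by approximation, the whole point being that the right-hand side of \eqref{BEL} carries no derivative of $\Phi$. Assume therefore to begin with that $\Phi\in C_b^1(\R^d)$. By Proposition \ref{diff} the map $x\mapsto X_t^x$ is $P$-a.s.\ continuously differentiable, and together with the standard $L^p$-moment estimates for the first variation process $\frac{\partial}{\partial x}X_t^x$ available under assumption (i), a dominated-convergence argument justifies differentiating under the expectation,
$$\frac{\partial}{\partial x}v(x)=E\left[\nabla\Phi(X_t^x)\,\frac{\partial}{\partial x}X_t^x\right].$$

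Next I would substitute the key identity \eqref{relation} of Theorem \ref{MallSob}. For each fixed $s\le t$ it reads $\frac{\partial}{\partial x}X_t^x=D_sX_t^x\,\sigma^{-1}(s,X_s^x,\pi_s^x)Y_su(t)$; multiplying by the weight $a(s)$ and integrating over $s\in[0,t]$, the normalisation $\int_0^t a(s)\,ds=1$ yields
$$\frac{\partial}{\partial x}v(x)=E\left[\int_0^t a(s)\,\nabla\Phi(X_t^x)\,D_sX_t^x\,\sigma^{-1}(s,X_s^x,\pi_s^x)Y_su(t)\,ds\right].$$
The chain rule of Lemma \ref{chainrule} identifies $\nabla\Phi(X_t^x)\,D_sX_t^x=D_s\big(\Phi(X_t^x)\big)$, so the integrand pairs the Malliavin derivative of the scalar $\mathcal{F}_t$-measurable random variable $\Phi(X_t^x)\in\mathbb{D}^{1,2}$ against the matrix-valued process $a(s)\,\sigma^{-1}(s,X_s^x,\pi_s^x)Y_su(t)$, which is Skorokhod integrable by Corollary \ref{SkorokhodII}. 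Applying the duality formula of Theorem \ref{duality} to each of the $d$ spatial components transfers the derivative onto a Skorokhod integral and produces exactly \eqref{BEL}, the transpositions serving only to contract the Brownian index correctly. It is precisely here that the mean-field structure shows up: the factor $u(t)$ is anticipating, so the integrand is genuinely non-adapted and the full Skorokhod machinery, hence the second-order Malliavin regularity of Proposition \ref{Malldiff}, is unavoidable.

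To pass from $C_b^1$ to a general measurable $\Phi$ with $\Phi(X_t^x)\in L^2(\Omega)$, I would choose $\Phi_n\in C_b^1$ with $\Phi_n(X_t^x)\to\Phi(X_t^x)$ in $L^2(\Omega)$ and apply \eqref{BEL} to each $\Phi_n$. Since the weight $\int_0^t a(s)[\sigma^{-1}(s,X_s^x,\pi_s^x)Y_su(t)]^\ast\delta W_s$ lies in $L^2(\Omega)$ independently of $n$, Cauchy-Schwarz gives convergence of the right-hand sides, while $v_n(x)\to v(x)$; a standard argument then upgrades this to $\frac{\partial}{\partial x}v=\lim_n\frac{\partial}{\partial x}v_n$. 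This last step is the main obstacle: it requires the convergence of the right-hand sides to be locally uniform in $x$, which forces one to build the approximating sequence so that $\Phi_n(X_t^x)\to\Phi(X_t^x)$ in $L^2$ uniformly for $x$ in compact sets, using the continuity of the flow $x\mapsto X_t^x$ together with uniform-in-$x$ moment bounds on the weight.

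Finally, \eqref{BEL2} follows from the integration-by-parts formula of Theorem \ref{IBP}. Writing the integrand of \eqref{BEL} as $a(s)\,u(t)^\ast[\sigma^{-1}(s,X_s^x,\pi_s^x)Y_s]^\ast$, and recalling that $u(t)\in\mathbb{D}^{1,2}$ by Proposition \ref{SkorokhodI} while $a(s)[\sigma^{-1}(s,X_s^x,\pi_s^x)Y_s]^\ast$ is adapted and square integrable, Theorem \ref{IBP} pulls the factor $u(t)$ out of the Skorokhod integral. The remaining integral of the adapted part becomes an ordinary It\^o integral, and the formula leaves behind the finite-variation correction $-\int_0^t a(s)[\sigma^{-1}(s,X_s^x,\pi_s^x)Y_sD_su(t)]^\ast\,ds$, which is precisely \eqref{BEL2}.
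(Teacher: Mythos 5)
Your Step 1 is essentially the paper's Step 1: differentiate under the expectation, insert the relation \eqref{relation} of Theorem \ref{MallSob}, average against $a$ using $\int_0^t a(s)\,ds=1$, apply the chain rule (Lemma \ref{chainrule}) backwards, and conclude with the duality formula (Theorem \ref{duality}), justified by Corollary \ref{SkorokhodII}; your derivation of \eqref{BEL2} from Theorem \ref{IBP} also matches the paper. The problem is the second half of your argument.

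You propose to pass in a single step from $\Phi\in C_b^1$ to a general measurable $\Phi$ with $\Phi(X_t^x)\in L^2(\Omega)$ by choosing $\Phi_n\in C_b^1$ with $\Phi_n(X_t^x)\to\Phi(X_t^x)$ in $L^2(\Omega)$ locally uniformly in $x$. You correctly identify that locally uniform convergence of the derivatives $\frac{\partial}{\partial x}v_n$ is what is needed to conclude that $v$ is differentiable with the claimed derivative, but you never construct such a sequence, and for discontinuous $\Phi$ (e.g.\ indicator functions, which are exactly the payoffs treated in Section \ref{SectionAppl}) there is no obvious way to do so: the quantity $E[|\Phi_n(X_t^x)-\Phi(X_t^x)|^2]$ depends on the law of $X_t^x$, and making it small uniformly over $x$ in a compact set requires uniform control of these laws near the discontinuity set of $\Phi$ --- this is not supplied by continuity of the flow alone. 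The paper circumvents precisely this difficulty by a three-stage extension: first bounded \emph{continuous} $\Phi$ (Step 2, where approximation by smooth compactly supported functions does yield the required locally uniform convergence), then bounded \emph{measurable} $\Phi$ via a monotone class theorem (Step 3, which needs only monotone pointwise limits and dominated convergence, no uniform approximation at all), and finally general $\Phi$ with $\Phi(X_t^x)\in L^2(\Omega)$ by the truncation $\Phi_n=\Phi 1_{\{\Phi\leq n\}}$ (Step 4). The monotone class argument is the missing device in your proposal; without it, or some substitute such as a uniform-in-$x$ density estimate for $X_t^x$, your approximation step remains an unproven assertion at exactly the point you yourself flag as ``the main obstacle''.
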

\begin{proof}
We will carry out the proof in four steps. First, we will show the formula for smooth functions $\Phi$ with compact support. Then extend it to any continuous and bounded function $\Phi$ by using a limit argument. We then get rid of the continuity by employing a monotone class argument. Finally, we consider any general function with the property that $\Phi (X_t^x)\in L^2(\Omega)$.

{\bf Step 1:} Assume first that $\Phi$ is infinitely differentiable with compact support. By Theorem \ref{MallSob} we have
$$\frac{\partial}{\partial x} X_t = D_s X_t \sigma^{-1}(s,X_s^x,\pi_s^x)Y_s u(t) , \quad s\leq t , \quad P-a.s.$$

Then multiplying both sides by the function $a$ and integrating over $s\in [0,t]$ we have
\begin{align}\label{Mallintrel}
\frac{\partial}{\partial x} X_t^{x} = \int_0^t a(s) D_s X_t \sigma^{-1}(s,X_s^x,\pi_s^x)Y_s u(t) ds, \quad P-a.s.
\end{align}
As a consequence,
\begin{align*}
\frac{\partial}{\partial x} v(x) =& \, E\left[ \Phi ' (X_t^{x}) \frac{\partial }{\partial x} X_t^{x}\right] \\
=& \, E\left[ \Phi ' (X_t^{x}) \int_0^t a(s) D_s X_t \sigma^{-1}(s,X_s^x,\pi_s^x)Y_s u(t) ds\right] \\
=& \, E\left[  \int_0^t a(s) D_s \Phi(X_t) \sigma^{-1}(s,X_s^x,\pi_s^x)Y_s u(t) ds\right]\\
=& \, E\left[ \Phi(X_t) \int_0^t a(s) \left[\sigma^{-1}(s,X_s^x,\pi_s^x)Y_s u(t)\right]^\ast \delta B_s\right]^\ast
\end{align*}
where we have used relation (\ref{Mallintrel}), the chain rule for the Malliavin derivative (backwards) and the duality formula for the Malliavin derivative which is justified by Corollary \ref{SkorokhodII}.

{\bf Step 2:} Assume $\Phi$ is bounded and continuous, in particular, $\Phi(X_t^x)\in L^2(\Omega)$. We can approximate $\Phi$ by a sequence of smooth functions $\{\Phi_n\}_{n\geq 0}$ with compact support such that $\Phi_n \to \Phi$ a.e. as $n\to \infty$. Define
$$\bar{v}(x) := E\left[\Phi(X_t^x) \int_0^t a(s) \left[\sigma^{-1}(s,X_s^x,\pi_s^x)Y_s u(t)\right]^\ast\delta W_s\right]^\ast.$$
To make reading clearer introduce the notation $C:= E[|\Phi(X_t^x)|^2]^{1/2}$ and the matrix -valued process $\xi_s := a(s) \sigma^{-1}(s,X_s^x,\pi_s^x)Y_s u(t)$, $0\leq s\leq t$. Then the objects $v(x):=E[\Phi(X_t^x)]$ and $\bar{v}(x)$ are well-defined since $\Phi(X_t^x)\in L^2(\Omega)$ and using Cauchy-Schwarz' inequality we have
$$|\bar{v}(x)| \leq C E\left[\int_0^t \tr[\xi_s \xi_s^\ast] ds + \int_0^t \int_0^t D_s \xi_r D_r \xi_s dr ds\right]^{1/2}$$
where we used It\^{o}'s isometry property for Skorokhod integrals, see Theorem \ref{duality} or e.g. \cite[Theorem 6.17.]{DOP08}. Observe that the first term is bounded since $a$ and $\sigma^{-1}$ are uniformly bounded and $u$ has integrable trajectories. The second term is bounded since $\xi_s$ is Malliavin differentiable for every $s\in [0,t]$ because $Y_su(t)$ is Malliavin differentiable for every $s\in [0,T]$ in virtue of Proposition \ref{detYt} in connection with Proposition \ref{Malldiff} as for $u(t)$, due to Corollary \ref{SkorokhodII}.

Now, we approximate $v$ by $v_n(x) := E[\Phi_n(X_t^x)]$. It is clear that $v_n \to v$ a.e. and now we can use the Bismut-Elworthy-Li formula on $\frac{\partial}{\partial x}v_n$ in order to estimate $|\frac{\partial}{\partial x}v_n(x) - \bar{v}(x)|$. Indeed, again by Cauchy-Schwarz inequality and It\^{o}'s isometry for the Skorokhod integral we have
\begin{align*}
\left|\frac{\partial}{\partial x}v_n(x) - \bar{v}(x)\right| &\leq E[|\Phi_n(X_t^x) - \Phi(X_t^x)|^2]^{1/2} E[|w(t)|^2]^{1/2}
\end{align*}
where $w(t):=\int_0^t \xi_s^\ast \delta W_s$ denotes the Malliavin weight. Now since $\Phi_n$ and $\Phi$ are continuous and bounded we have for every compact subset $K\subset \R^d$
\begin{align*}
\lim_{n\to \infty}\sup_{x\in K} \left|\frac{\partial}{\partial x}v_n(x) - \bar{v}(x)\right| =0.
\end{align*}
Hence, $v$ is continuously differentiable with $\frac{\partial}{\partial x} v = \bar{v}$.

{\bf Step 3:} Let us denote
$$\mathcal{G}:= \{\Phi: \R^d\rightarrow \R_+ \, \mbox{ continuous and bounded} \}.$$

It is clear that $\mathcal{G}$ is a multiplicative class, i.e. $\psi_1,\psi_2\in \mathcal{G}$ then $\psi_1\psi_2\in \mathcal{G}$. Further, let $\mathcal{H}$ be the class of functions $\Phi: \R^d\rightarrow \R_+$ for which \eqref{BEL} holds. From Step 2 we have $\mathcal{G} \subset \mathcal{H}$. Then $\mathcal{H}$ is a monotone vector space on $\R^d$, see e.g. \cite[p.23]{Prot05} for definitions. Indeed, from dominated convergence we have monotonicity. In fact, if $\{\Phi_n\}_{n\geq 0}\subset \mathcal{H}$ such that $0\leq \Phi_1\leq \cdots \leq \Phi_n\leq \cdots$ with $\lim_n \Phi_n=\Phi$ and $\Phi$ is bounded then $\Phi \in \mathcal{H}$. Furthermore, denote by $\sigma(\mathcal{G}):= \{f^{-1}(B), \, B\in \mathcal{B}(\R_+), \, f\in \mathcal{G}\}$ where $\mathcal{B}(\R_+)$ denotes the Borel $\sigma$-algebra in $\R_+$. Then we are able to apply the monotone class theorem, see e.g. \cite[Theorem 8]{Prot05} and conclude that $\mathcal{H}$ contains all bounded and $\sigma(\mathcal{G})$-measurable functions $\Phi:\R^d \rightarrow \R_+$. Nevertheless, $\sigma(\mathcal{G})$ coincides with the Borel $\sigma$-algebra of $\R^d$ since $\mathcal{G}$ contains all continuous bounded functions. So we conclude that $\mathcal{H}$ contains all bounded Borel measurable functions on $\R^d$.

{\bf Step 4:} The last step is then to approximate any $\mathcal{B}(\R^d)$-measurable function $\Phi:\R^d \rightarrow \R_+$ such that $\Phi (X_t^x)\in L^2(\Omega)$ by a sequence $\{\Phi_n\}_{n\geq 0}$ of bounded $\mathcal{B}(\R^d)$-measurable functions. For example,
$$\Phi_n(x) = \Phi(x) 1_{\{\Phi(x)\leq n\}}, \quad x\in \R^d, \quad n\geq 0.$$

Then $\Phi_n\in \mathcal{H}$ for each $n\geq 0$. Define $\widetilde{v}(x):=E[\Phi(X_t^x)w(t)]$. Then by Cauchy-Schwarz' inequality and It\^{o}'s isometry we know
$$\sup_{x\in K}\left|\frac{\partial}{\partial x} v_n(x) - \widetilde{v}(x)\right|\leq C \sup_{x\in K} E\left[|\Phi_n(X_t^x) - \Phi(X_t^x)|\right]^{1/2},$$
for any compactum $K\subset \R^d$ and some finite constant $C>0$. Finally, observe that clearly one has
$$\sup_{x\in K}E\left[|\Phi_n(X_t^x) - \Phi(X_t^x)|\right]^{1/2} \xrightarrow{n\to \infty} 0$$
thus proving the result.
\end{proof}

\section{Applications}\label{SectionAppl}

In this section we wish to give a rather simple but illustrative example of how the dependence on the expectation of the solution may give rise to more complicated terms when deriving the Bismut-Elworthy-Li formula. In one of the examples we adopt the context of finance where the formula has a broad use for the computation of the so-called Delta sensitivities which, in short, is the sensitivity of prices of contracts with respect to the initial value of the price of the stock taken into consideration. We will consider the price of an option written on a stock whose dynamics depend on the expectation of the price process. Then we provide two numerical examples in order to demonstrate that the Bismut-Elworthy-Li formula, or the so-called Malliavin method for computing the Delta is numerically more efficient than the usual finite difference method even when the function $\Phi$ is discontinuous. 

\begin{ex}[Black-Scholes model with continuous dividend payments]
Let $S=\{S_t^x, t\in [0,T]\}$ represent the price dynamics of some asset with initial price $x>0$ governed by the following SDE
\begin{align}\label{II_BSmodel}
\frac{dS_t^x}{S_t^x} = (\mu- q \rho_t^x) dt +  \sigma dW_t, \quad \rho_t^x:=E[S_t^x], \quad t\in [0,T], \quad S_0^x=x>0
\end{align}
where $\mu, q, \sigma\in \R$ and $\sigma>0$. Let $S_t^0 = e^{rt}$, $t\in [0,T]$, $r\in \R$ with $r>0$ be the risk-less asset and $\Phi:\R\rightarrow [0,\infty)$ a pay-off function.

Then the price of a European option at current time with maturity $T>0$ (under the risk-neutral valuation approach) is given by
$$p_T(x) = e^{-rT} E_{\widetilde{P}}\left[\Phi(S_T^x)\right]$$
where $\widetilde{P}$ is the risk-neutral measure, i.e.
$$\frac{d\widetilde{P}}{dP}\bigg|_{\mathcal{F}_t} = M_t^x := e^{-\int_0^t \theta_u^x dW_u - \frac{1}{2}\int_0^t (\theta_u^x)^2du}, \quad t\in [0,T]$$
where
$$\theta_t^x := \frac{\mu - r -q\rho_t^x}{\sigma}, \quad t\in [0,T]$$
is the market price of risk process.

It follows that $\rho_t^x = \frac{x\mu e^{\mu t}}{qxe^{\mu t} + \mu - qx}$ obtained as the solution of a Riccati equation. Also, we have $\frac{\partial}{\partial x}\rho_T^x = \frac{e^{-\mu t}}{x^2}(\rho_t^x)^2$ and $\frac{\partial}{\partial x}\theta_t^x = -\frac{q}{\sigma}\frac{e^{-\mu t}}{x^2}(\rho_t^x)^2$. Then the $\Delta$-sensitivity of an option $\Phi$ written on $S_T^x$ is given by
$$\Delta=e^{-rT} E\left[\Phi(S_T^x) \left(\frac{\partial}{\partial x}M_T^x  +  \frac{1}{x^2\sigma}e^{-\mu T}\rho_T^x\int_0^T a(s)M_T^x \delta W_s\right)\right].$$

Let us find a simpler expression for the stochastic integral. Using the integration by parts formula for the Skorokhod integral, see Theorem \ref{IBP}, we find that
$$\int_0^T M_T^x \delta W_s = \left( W(T) - \int_0^T \theta_s^x ds \right)M_T^x$$
and hence, taking $a\equiv \frac{1}{T}$ we find that under the risk-neutral measure $\widetilde{P}$, the $\Delta$-sensitivity is given by
$$\Delta=e^{-rT} E_{\widetilde{P}} \left[\Phi(S_T^x)Z_T\right]$$
with Malliavin weight
$$Z_T:=\frac{q}{x^2 \sigma}\left(\int_0^T e^{-\mu s}(\rho_s^x)^2 dW_s + \int_0^T \theta_s^x e^{-\mu s} (\rho_s^x)^2 ds + \frac{e^{-\mu T}}{q} \rho_T^x \frac{1}{T}\left( W(T) - \int_0^T \theta_s^x ds\right)\right).$$

Finally, observe that if we ignore the dependence on $E[S_t^x]$, e.g. taking $q = 0$ then we obtain
$$\Delta = e^{-rT} E_{\widetilde{P}} \left[\Phi(S_T^x) \frac{W(T) - \int_0^T \theta_s^x ds}{Tx\sigma}\right]=  e^{-rT} E_{P} \left[\Phi(S_T^x) \frac{\hat{W}(T)}{Tx\sigma}\right],$$ where $\hat{W}$ is a standard Brownian motion under $P$ and hence the $\Delta$ coincides with the classical one.
\end{ex}

\begin{ex}
Consider now the following SDE,
\begin{align}\label{II_GeomSDE}
\frac{dX_t^x}{X_t^x} = f(\rho_t^x) dt +  \sigma dW_t, \quad \rho_t^x:=E[X_t^x], \quad t\in [0,T], \quad X_0^x=x>0,
\end{align}
for some suitable continuously differentiable function $f:\R \rightarrow \R$. Let us compute the derivative of $v(x):=E[\Phi(X_T^x)]$ for an irregular function $\Phi$.

Using Theorem \ref{BELthm} with $a(s) \equiv 1/T$ we have
$$v'(x) = E\left[ \Phi (X_T^x) \frac{1}{T}\int_0^T \frac{1}{\sigma X_s} \frac{X_s}{x} u(T) \delta W_s\right] = \frac{1}{\sigma x T} u(T) E\left[\Phi(X_T^x) W_T\right]$$
where
$$u(T) = 1+x \int_0^T f'(\rho_s^x) \frac{\partial}{\partial x} \rho_s^x ds.$$

This special "geometric-type" case shows that whenever $u$ is deterministic then the delta $v'$ is an rescaled version of the classical delta and they coincide when $f$ is constant, indeed.
\end{ex}

Let us then see what happens when we make the volatility coefficient depend on the expectation, which leads to a stochastic $u$.

\begin{ex}
Consider now
\begin{align}\label{II_GeomSDE2}
\frac{dX_t^x}{X_t^x} = \mu dt +  \sigma \rho_t^x dW_t, \quad \rho_t^x:=E[X_t^x], \quad t\in [0,T], \quad X_0^x=x>0.
\end{align}

In this case according to Theorem \ref{MallSob} in connection with Theorem \ref{BELthm} the Malliavin weight, denoted by $w_T$,  becomes
$$w_T^x:= \frac{1}{\sigma x T} \int_0^T \frac{1}{\rho_s^x} \left(1-\sigma^2 \int_0^T (\rho_s^x)^2 ds + \sigma \int_0^T \rho_s^x dW_s\right)\delta W_s.$$

Fortunately we can rewrite the above expression in terms of an It\^{o} integral using integration by parts. Namely,
$$w_T^x = \left(1-\sigma^2 \int_0^T (\rho_s^x)^2 ds + \sigma \int_0^T \rho_s^x dW_s\right) \int_0^T \frac{1}{\rho_s^x}dW_s - \sigma T.$$

Denote the random variables $F:=\int_0^T \rho_s^x dW_s$ and $G:= \int_0^T \frac{1}{\rho_s^x}dW_s$. Then the vector $(F,G)$ is normally distributed with zero mean and covariance matrix
$$\Sigma := \begin{pmatrix} \int_0^T \rho_s^2 ds & T \\ T & \int_0^T \frac{1}{\rho_s^2} ds\end{pmatrix}.$$

Altogether we obtain
$$v'(x) = \frac{1}{\sigma x T} E\left[\Phi (X_T^x)\left( 1-\sigma^2 \int_0^T \rho_s^2 ds + \sigma F\right) G  \right] - \frac{1}{x} E[\Phi(X_T^x)].$$

Let, for instance, $\Phi(x) = (x-K)1_{\{x\geq K\}}$ for some fixed $K>0$, also known as a \emph{European call option} in the context of finance. Then we use a Monte Carlo method to compute the above expression and compare it to the following finite difference method scheme hereunder
$$v'(x) \approx \frac{E[\Phi(X_T^{x+h})]-E[\Phi(X_T^{x})]}{h}, \quad h\approx 0.$$

\begin{figure}[H]
\centering
\includegraphics[scale=0.6]{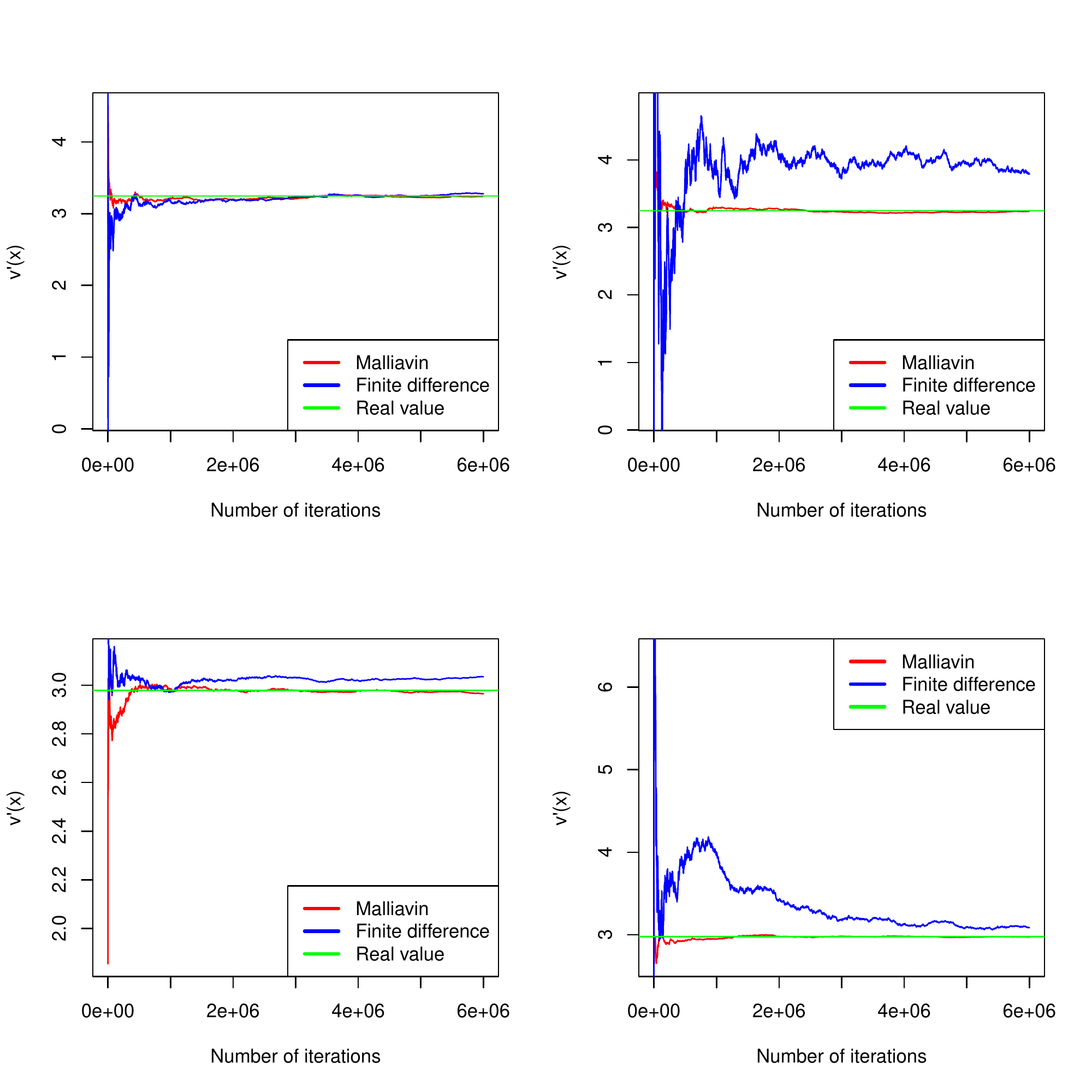}
\caption{Approximation of a Call Option: On top parameters set to $\sigma=0.8$, $\mu=1$, $x=1$, $K=2$ and $T=1$ with $h=0.1$ to the right and $h=0.01$ to the left. On bottom parameters set to $\sigma=1.2$, $\mu=1$, $x=0.5$, $K=0.7$ and $T=1$ with $h=0.1$ to the right and $h=0.01$ to the left}
\end{figure}

In the upper left figure, $h=0.1$ for the finite difference method and the two methods are seemingly giving similar accurate results, although the Malliavin method is more efficient in number of iterations. If one wishes to decrease $h$ in order to gain precision we can see how the finite difference method becomes unstable (upper right and lower right figures). In conclusion, the integration by parts formula seems to be a much more efficient tool for the computation of sensitivities for mean-field SDEs, at least, in this setting.

Let us now try a more irregular function $\Phi$, namely $\Phi(x) = 1_{\{x\geq K\}}$ which has a discontinuity at $x=K$, also known as a \emph{European digital option} in the context of finance. Denote
$$d(x) := \frac{\log \frac{K}{x} - \mu T + \frac{1}{2}\sigma^2 \int_0^T (\rho_s^x)^2 ds}{\sigma}.$$

Then
$$v'(x) = \frac{1}{\sigma x T} E\left[ 1_{\{F \geq d(x)\}} \left(\left(1-\frac{\sigma^2 x^2}{2\mu}(e^{2\mu T} - 1) + \sigma F \right)G - \sigma T \right) \right].$$

We compare again the Malliavin method with a finite difference scheme.

\begin{figure}[H]
\centering
\includegraphics[scale=0.6]{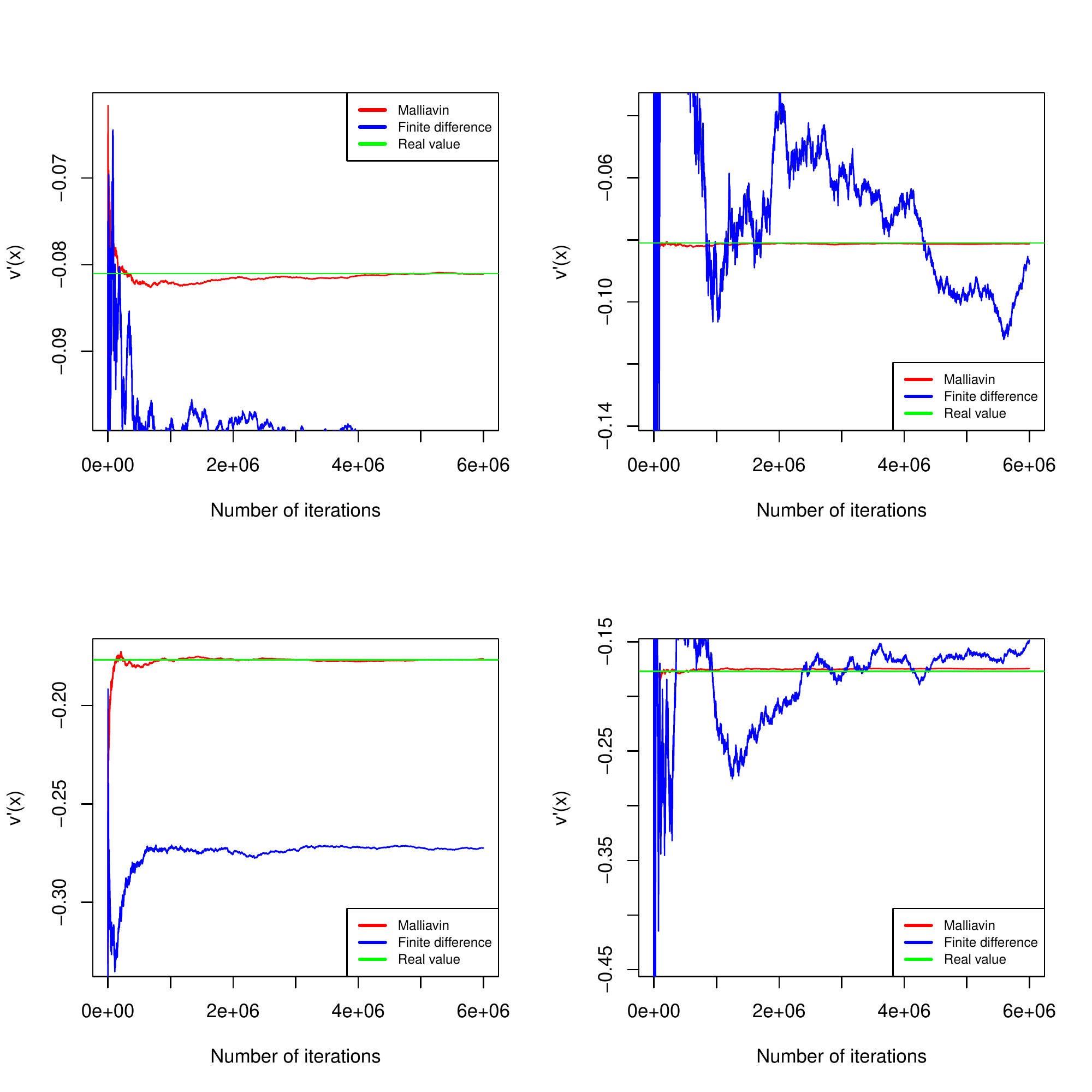}
\caption{Approximation of a Digital Option: On top parameters set to $\sigma=0.8$, $\mu=1$, $x=1$, $K=2$ and $T=1$ with $h=0.1$ to the right and $h=0.01$ to the left. On bottom parameters set to $\sigma=1.2$, $\mu=1$, $x=0.5$, $K=0.7$ and $T=1$ with $h=0.1$ to the right and $h=0.01$ to the left}
\end{figure}
\end{ex}

The conclusions here are clear. The regularity of the function $\Phi$ plays an important role. We see that the bias in the finite difference method seems high and it becomes unstable when decreasing the values of $h$. On the contrary, the Bismut-Elworthy-Li formula gives a better approximation of the sensitivity, even when the function $\Phi$ is discontinuous.


\begin{thebibliography}{00}

\bibitem{banos.nilssen.14}
D. Ba\~{n}os, T. Nilssen, \textit{Malliavin and flow regularity of SDEs. Application to the study of densities and the stochastic transport equation}, (to appear in Stochastics An International Journal of Probability and Stochastic Processes).

\bibitem{Bismut.84} J.M. Bismut,  \textit{Large deviation and Malliavin calculus.} Progress in Mathematics, 45, Birkh\"{a}user, 1984
  
\bibitem{BLPR.14} R. Buckdahn, J. Li, P. Peng, C. Rainer, \textit{Mean-field stochastic differential equations and associated PDEs.} arXiv:1407.1215 [math.PR].

\bibitem{DOP08} G. Di Nunno, B. \O ksendal, F. Proske, \textit{Malliavin Calculus for L\'{e}vy Processes with Applications to Finance.} Springer (2008).

\bibitem{El-Li.94} K. D. Elworthy, X-M. Li, \textit{Formulae for the Derivatives of Heat Semigroups.} Journal of Functional Analysis, 125 (1994) 252--286
  
\bibitem{FLLLT} E. Fourni\'{e}, J-M. Lasry, J. Lebuchoux, P-L. Lions, N. Touzi, \textit{Applications of Malliavin Calculus to Monte Carlo methods in finance.} Finance Stoch., 3(4) (1999) 391--412

\bibitem{Gart.88} J. G\"{a}rtner, \textit{On the McKean-Vlasov Limit for Interacting Diffusions.} Math. Nachr. 137 (1988) 197--248   

\bibitem{koha.ogawa.97} A. Kohatsu-Higa, S. Ogawa, \textit{Weak rate of convergence of an Euler scheme of nonlinear SDEs.} Monte Carlo Method Appl. 3 (1997) 327--345.

\bibitem{Kunita} H. Kunita, \textit{Stochastic Flows and Stochastic Differential Equations.} Cambridge University Press, 1990.

\bibitem{Mall78}  P. Malliavin, \textit{ Stochastic calculus of variations
and hypoelliptic operators.} In: Proc. Inter. Symp. on Stoch. Diff.
Equations, Kyoto 1976, Wiley, (1978), 195--263.
 
\bibitem{Mall97}  P. Malliavin, \textit{Stochastic Analysis.} Springer (1997)

\bibitem{Nua10} D. Nualart, \textit{The Malliavin Calculus and Related Topics.} 2nd Ed. Springer, 2010.

\bibitem{Ogawa.95} S. Ogawa, \textit{Monte Carlo Simulation of Nonlinear
Diffusion Processes.} Japan J. Indust. Appl. Math., 9 (1992) 25--33.

\bibitem{Prot05} {\sc P. E. Protter}, \textit{Stochastic integration and differential equations}, volume 21 of Applications of Mathematics (New York), Springer-Verlag, Berlin, 2nd ed. Stochastic Modelling and Applied Probability, (2004).

\bibitem{Ivo.78} I. Vrko\v{c}, \textit{Liouville formula for systems of linear homogeneous It\^{o} stochastic differential equations.} Commentationes Mathematicae Universitatis Carolinae, 19(1) (1978) 141--146.


\end{thebibliography}
\end{document}